\newcommand{\J}{\mathcal{J}}
\newcommand{\LL}{\mathcal{L}}
\newcommand{\OO}{\mathcal{O}}
\newcommand{\h}{h}
\newcommand{\D}{\mathcal{D}}
\newcommand{\M}{\mathcal{M}}
\newcommand{\Aut}{\mathrm{Aut}}
\newcommand{\Der}{\mathrm{Der}}
\newcommand{\C}{\mathbb{C}}
\newcommand{\Ch}{\mathbb{C}_h}
\newcommand{\Oh}{\mathcal{O}_h}
\newcommand{\Eh}{\mathcal{E}_h}
\newtheorem{theorem}{Theorem}
\newtheorem{prop}[theorem]{Proposition}
\newtheorem{lemma}[theorem]{Lemma}
\newtheorem{defn}[theorem]{Definiton}
\author{Vladimir Baranovsky, Taiji Chen}
\date{January 3, 2017}
\title{Quantization of vector bundles on Lagrangian subvarieties.}
\begin{document}

\maketitle
\abstract{We consider a smooth Lagrangian subvariety $Y$ in a smooth algebraic variety $X$
with an algebraic symplectic from. For a vector bundle $E$ on $Y$ and a choice 
$\Oh$ of deformation quantization of the structure sheaf $\OO_X$ we 
establish when $E$
admits a deformation quantization to a module over $\Oh$.  If the necessary conditions hold, we describe the set of 
equivalence classes of such quantizations.}
\section*{Introduction.}

Consider a smooth algebraic variety $X$ over a field $\C$ of characteristic zero
with an algebraic symplectic
form $\omega \in H^0(X, \Omega^2_X)$, and 
assume that we are given a deformation quantization 
$\Oh$ of the structure sheaf $\mathcal{O}_X$ which agrees with $\omega$.
This means that $\mathcal{O}_h$ is a Zariski sheaf of 
flat associative $\C[[h]]$-algebras on $X$, for which we can find local 
$\C[[h]]$-module isomorphisms $\eta: \Oh \simeq \mathcal{O}_X[[h]]$ such 
that its product $*$ satisfies
$$
a * b  \equiv ab + \frac{1}{2} h P(da, db) \qquad (mod\ h^2) 
$$
where $a, b$ are local sections of $\mathcal{O}_X$ (viewed as local sections of 
$\Oh$ using $\eta$) and $P  \in H^0(X, \Lambda^2 T_X)$ is the Poisson bivector
obtained from $\omega$ via the isomorphism $T_X \to \Omega^1_X$
induced by the same $\omega$.

Given such data and a coherent sheaf $E$ of $\mathcal{O}_X$-modules, 
we could look for a deformation quantization of $E$ as well. Thus, we want 
a Zariski sheaf $\mathcal{E}_h$ of $\Oh$-modules which is flat over 
$\C[[h]]$, complete in $(h)$-adic topology and such that the
$\Oh$-action reduces modulo $h$ to the original action of $\mathcal{O}_X$ on $E$. 
The usual questions are: does $\mathcal{E}_h$ exist at all, 
if yes then how many such sheaves can we find?

In full generality this is a difficult problem. One possible 
simplification is to assume that $E$ is a direct image of a vector bundle
on a closed smooth subvariety $j: Y \hookrightarrow X$. We will denote this 
bundle by $E$ as well (i.e., abusing notation we think of any sheaf on $Y$
as a sheaf on $X$ using the direct image functor $j_*$).

In general, $\Eh$ will not exist at all. 
The first observation is that 
$Y$ must be coisotropic with respect to the symplectic form $\omega$. In other 
words, the bivector 
$P$ projects to a zero section of $\Lambda^2 N$, where $N$ is 
the normal bundle. See Proposition 2.3.1 in \cite{BG}
 for the explanation why $Y$ has to be coisotropic
(this also follows from the proof of Gabber's Integrability of Characteristics
Theorem). In this paper we assume that $Y$ is in fact Lagrangian (i.e. 
isotropic of dimension $\frac{1}{2} \dim_\C X$). Then $P$ induces a
perfect pairing between the tangent bundle $T_Y$ and the normal bundle
$N$ of $Y$, and hence an isomorphism 
$N^* \simeq T_Y$. The case when $E$ has rank $r = 1$ was considered in 
\cite{BGKP} and here we deal with general $r$.

The second observation is that $E$ must carry a  
structure somewhat  similar to a flat algebraic connection.
One could say that the ``quasi-classical limit" of $\Eh$ is given by $E$ 
together with this additional structure, and it is this quasi-classical
limit which is being deformed, not just $E$.

More details are given in Section 2, and the brief account follows here.
A convenient language to use is that of Picard algebroids on $Y$, 
cf. \cite{BB}, i.e. those
Lie algebroids $\LL$ which fit a short exact sequence 
$$
0 \to \mathcal{O}_Y \to \mathcal{L} \to T_Y \to 0
$$
(the trivialization of the sheaf of the left is chosen and forms a 
part of the structure). Such algebroids are classified by their characteristic
class $c(\LL)$ with values in the truncated de Rham cohomology: 
$$
H^2_F(Y) : = H^2(Y, 0 \to \Omega^1_Y \to \Omega^2_Y \to \ldots)
$$ 
One example of such a sheaf is  $\mathcal{L}(\Oh) = \mathcal{T}or_1^{\Oh} (\mathcal{O}_Y, \mathcal{O}_Y)$. 

\medskip
\noindent
Next, $E$ itself gives an 
Atiyah Lie algebroid $\mathcal{L}(E)$ with its exact sequence
$$
0 \to End_{\mathcal{O}_Y}(E) \to \mathcal{L}(E) \to T_Y \to 0. 
$$
A choice of deformation quantization $\Eh$, or even the isomorphism class 
of $\Eh/h^2 \Eh$ as a module over $\Oh/(h^2)$, gives a morphism of 
Zariski sheaves $\gamma: \mathcal{L}(\Oh) \to \mathcal{L}(E)$ which agrees 
with the Lie bracket but not the $\mathcal{O}_Y$-module structure. One can
change the module structure on the source of $\gamma$, 
also changing its characteristic class in $H^2_F(Y)$, to obtain 
a new Picard algebroid $\LL^+(\Oh)$ and a morphism of Zariski 
sheaves $\gamma^+: \LL^+(\Oh) \to \LL(E)$ which now agrees both with 
the bracket and 
the $\mathcal{O}_Y$-module structure. It also embeds $\mathcal{O}_Y \to 
End_{\mathcal{O}_Y}(E)$ as scalar endomorphisms and descends to identity on $T_Y$.
In this situation, following \cite{BB}, we say that $E$ is a module 
over $\mathcal{L}^+(\Oh)$. 

Existence of such $\gamma^+$ is a non-trivial condition on $E$. We
will see in Section 2, and it is only a slight rephrasing of Section 2.3 in 
\cite{BB}, that in this case the projectivization 
$\mathbb{P}(E)$ has a flat algebraic connection and the refined
first Chern class $c_1(E) = c(\LL(\det E))\in H^2_F(Y)$
satisfies the identity 
$$
\frac{1}{r} c_1(E) = c(\mathcal{L}^+(\Oh)). 
$$

Existence of a full deformation quantization for an $\mathcal{L}^+(\Oh)$-module $E$
is formulated in terms of the non-commutative period map
of \cite{BK}: a particular choice of $\mathcal{O}_h$ gives a class 
$$
[\omega] + h \omega_1 + h^2 \omega_2 + \ldots \in H^2_{DR}(X)[[h]]
$$ 
in the algebraic de Rham cohomology of $X$. We will mostly treat the period map as 
a black box, appealing to rank 1 results of \cite{BGKP} that will serve as a bridge
between the definitions in \cite{BK} and our argument.

By the Lagrangian condition, $[\omega]$ restricts to zero on $Y$. The class
of $c(\LL(\Oh))$ in $H^2_F(Y)$ is a canonical lift of the restriction 
 $j^* \omega_1$ of  $\omega_1$ under the closed 
embedding $j: Y \to X$. We will abuse notation and write $j^* \omega_1$ for that
lift as well (note however that in a number of cases of interest, such as $X$ and
$Y$ being complex projective, $H^2_F$ is a subspace of $H^2_{DR}$ so equations in 
the truncated de Rham cohomology may be viewed as equations in the usual de Rham 
cohomology). The class $\omega_1$ affects the choice of $E$ via the identity 
$$
c(\mathcal{L}^+(\Oh)) = \frac{1}{2} c_1(K_Y) 
+ j^* \omega_1,
$$
established in 
Proposition 4.3.5 and Lemma 5.3.5(ii) of \cite{BGKP}.
 The
restrictions of the remaining classes  are also an important ingredient in the main result of our paper:

\begin{theorem} A rank $r$ vector bundle $E$ on a smooth Lagrangian 
subvariety $j: Y \to X$ admits a deformation quantization if and only if the 
following conditions hold:
\begin{enumerate}
\item $j^* \omega_k = 0$ in $H^2_{DR}(Y)$ for $k \geq 2$;
\item the projectivization $\mathbb{P}(E)$ admits a flat algebraic connection;
\item the refined first Chern class in $H^2_F(Y)$ satisfies 
$$
\frac{1}{r} c_1(E) = \frac{1}{2} c_1(K_Y) 
+ j^* \omega_1;
$$
for the canonical lift of $j^* \omega_1$ to $H^2_F(Y)$ representing the 
class of the Picard algebroid $\mathcal{L}(\Oh) = \mathcal{T}or_1^{\Oh} (\mathcal{O}_Y, \mathcal{O}_Y)$. 
\end{enumerate}
If nonempty, the set of equivalence classes of 
all rank $r$ deformation quantizations on $Y$ for 
various $E$
has a free action of the group $\mathcal{G}$ of isomorphism classes
of $\mathcal{O}_Y[[h]]^*$-torsors with a flat algebraic connection.
The set of orbits for this action may be identified with the space of 
all $PGL(r, \C[[h]])$ bundles with a flat algebraic connection.
\end{theorem}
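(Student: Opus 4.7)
The plan is to split the argument into (a) necessity of (1)--(3), (b) sufficiency via an order-by-order lifting in $h$, and (c) the free action of $\mathcal{G}$ and identification of the orbit set. For necessity: as explained in the introduction, $\Eh/h^2\Eh$ already produces the morphism $\gamma^+:\LL^+(\Oh)\to \LL(E)$, so $E$ is an $\LL^+(\Oh)$-module; this yields the flat connection on $\mathbb{P}(E)$ (condition (2)) and, via the identity $c(\LL^+(\Oh))=\tfrac{1}{2}c_1(K_Y)+j^*\omega_1$ recalled from \cite{BGKP}, the first-Chern-class identity (condition (3)). Necessity of (1) is more subtle: treating the non-commutative period map of \cite{BK} as a black box, I would show that the obstruction to extending a quantization past order $h^k$ has scalar part equal to $j^*\omega_k$, and reduce to the rank-one case by passing to $\det\Eh$ and invoking \cite{BGKP}.

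For sufficiency, I would build $\Eh$ inductively. Starting from the $\LL^+(\Oh)$-module provided by (2) and (3), suppose $\Eh_n$ over $\Oh/(h^{n+1})$ has been constructed; choose local lifts $\Eh_{n+1,\alpha}$ on a trivializing cover $\{U_\alpha\}$, and let $c_{\alpha\beta\gamma}$ be the \v{C}ech $2$-cocycle measuring the differences, valued in $\mathrm{End}_{\OO_Y}(E)$. I would split $\mathrm{End}_{\OO_Y}(E)=\OO_Y\oplus \mathrm{End}^{\circ}_{\OO_Y}(E)$ into scalar and traceless parts. The scalar component is identified with $j^*\omega_{n+1}$ via the rank-one calculation of \cite{BGKP} applied to the determinant line and so vanishes by (1); the traceless component is the obstruction to lifting the $\LL^+(\Oh)$-action compatibly with the projective connection from (2), and vanishes by an inductive flattening argument using the Hamiltonian action built from the Poisson bivector $P$. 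Regluing the local lifts then produces $\Eh_{n+1}$, and the inverse limit yields $\Eh$.

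For the classification, the action $\Eh\mapsto \Eh\otimes_{\Oh}\mathcal{L}_h$ by a flat $\OO_Y[[h]]^*$-torsor (quantized via the $r=1$ case of the theorem) is free because an isomorphism $\Eh\otimes_{\Oh}\mathcal{L}_h\simeq\Eh$ forces $\mathcal{L}_h^{\otimes r}\simeq\Oh$ on determinants, and a direct order-by-order argument then forces $\mathcal{L}_h\simeq\Oh$ in $\mathcal{G}$. The projectivization $\Eh\mapsto\mathbb{P}(\Eh)$ produces a $PGL(r,\C[[h]])$-bundle with flat connection and is constant on $\mathcal{G}$-orbits, since tensoring by a line bundle preserves the projectivization; surjectivity is proved by gluing the matrix-algebra quantizations associated to a given $PGL$-bundle, the obstruction being controlled by the same data as in the sufficiency argument. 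The most delicate step of the entire program is the identification of the scalar \v{C}ech obstruction at order $n$ with $j^*\omega_n$: this requires an explicit unpacking of the non-commutative period map of \cite{BK}, and I would carry it out by bootstrapping from the rank-one computation of \cite{BGKP} via the determinant line $\det\Eh$.
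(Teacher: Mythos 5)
Your necessity argument for (2) and (3) matches the paper's Section 2, but the rest of the proposal has gaps that I do not see how to close within your framework. The central one is your reliance on ``passing to $\det\Eh$'': the determinant of a module over the non-commutative sheaf of algebras $\Oh$ is not defined, so the reduction of the scalar part of each order-$n$ obstruction to the rank-one computation of \cite{BGKP} has no starting point. The paper's substitute for this is the structure theory of the Harish--Chandra pair $\langle \Aut(\D,\M_r),\Der(\D,\M_r)\rangle$: the splittings $\Der(\D,\M_r)\simeq \frac{1}{h}\J\oplus\mathfrak{pgl}_h(r)$ and $\Aut(\D,\M_r)\simeq[\Aut(\D,\M_1)/\C^*]\times[GL(r)\ltimes \exp(h\cdot\mathfrak{pgl}_h(r))]$ (Lemmas 3--4), which let one reformulate the quantization as a torsor-lifting problem and split off an honest rank-one torsor $R$ to which \cite{BK} and \cite{BGKP} apply. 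These splittings also dispose of your second weak point: the claim that the traceless component of the obstruction ``vanishes by an inductive flattening argument using the Hamiltonian action built from $P$'' is not an argument, and there is no reason for that component to vanish for arbitrary local choices. In the paper the entire $\mathfrak{pgl}_h(r)$-direction is decoupled once and for all (Step 1 of Theorem 8: lifts of $P_0$ to $P_1$ are exactly flat $PGL_h(r)$-bundles), so no higher traceless obstructions ever arise. Relatedly, your obstruction does not live in \v{C}ech $H^2$ of $\mathrm{End}_{\OO_Y}(E)$ but in the cohomology of a complex mixing forms and endomorphisms, which is why the answer is phrased in $H^2_{DR}(Y)$ and $H^2_F(Y)$; the paper encodes this through the truncated Deligne-type complex $\mathfrak{Del}_h(Y)$.

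The classification part also has a genuine error: from $\Eh\otimes\mathcal{L}_h\simeq\Eh$ you deduce $\mathcal{L}_h^{\otimes r}\simeq\Oh$ and then ``order by order'' that $\mathcal{L}_h\simeq\Oh$, but the group of flat $\OO_Y[[h]]^*$-torsors can have $r$-torsion, so this implication fails. The paper proves freeness differently: for a fixed lift $P_1$ (equivalently, a fixed flat $PGL_h(r)$-bundle, not merely a fixed $\mathbb{P}(E)$), the lifts to the full pair are the trivializations of a gerbe over $\mathfrak{Del}_h(Y)$ with connective structure and curving, and the set of such trivializations, if non-empty, is automatically a torsor over $H^1(Y,\mathfrak{Del}_h(Y))\simeq\mathcal{G}$. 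Your identification of the orbit set with flat $PGL_h(r)$-bundles is in the right spirit, but note that the relevant invariant is the flat $PGL(r,\C[[h]])$-bundle produced at Step 1, which carries more information than the flat $PGL(r)$-bundle $\mathbb{P}(E)$ of condition (2). In short, your plan is a genuinely different, more naive deformation-theoretic route, and the missing ingredient throughout is precisely the Harish--Chandra/Fedosov mechanism (or an equivalent device) that makes the scalar and projective directions separable at all orders in $h$.
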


The paper is organized as follows. In Section 1 we discuss the relevant 
details on Lie algebroids and modules over them. In Section 2 we give 
a definition of Harish Chandra pairs, introduce the main Harish Chandra pair
for this paper and etablish its important algebraic properties. In 
Section 3 we explain how the Harish Chandra description arises naturally from 
the formal Darboux lemma for a deformation quantization, and reformulate
our main problem as a lifting problem for transitive Harish Chandra torsors.
In Section 4 we study the lifting problem in three steps and prove the 
main results. Finally, in Section 5
we discuss some related open questions. 

\bigskip
\noindent
To the best of our knowledge, quantization for square roots of the 
canonical bundle has been discovered (without proof) by M. Kashiwara 
in \cite{Ka} in the framework of complex analytic contact geometry. Later
D'Agnolo and Schapira \cite{DS} established a similar result for Lagrangian
submanifolds of a complex analytic symplectic manifold. In the 
$C^\infty$  context, some closely related constructions can be found in the 
work of Nest and Tsygan \cite{NT}. Obstructions to deformation quantization 
have been also studied by Bordemann in \cite{Bo}. The case of complex
tori and quantization of arbitrary sheaves has been studied by 
Ben-Bassat, Block and Pantev in \cite{BBP}. 
The case an arbitrary line bundle and Lagrangian $Y$ 
was considered in \cite{BGKP}. For general rank $r$, 
results on deformations modulo $\h^3$ were obtained 
in \cite{P} where the relation with projectively 
flat algebraic connections has been discussed.
We apologize for any
possible omissions in the references, asking to view them as a 
sign of ignorance rather than arrogance, as the literature on the
subject is somewhat disorganized. 

\bigskip
\noindent
\textbf{Notation.} To unload notation we will write $\Ch$ for the ring $\C[[h]]$ of 
formal power series in $h$ and $\Ch^*$ for its multiplicative group. The symbols
$GL_h(r)$, $PGL_h(r)$ will stand for the groups of $\Ch$-valued points, and 
similarly for their tangent Lie algebras. 

\bigskip
\noindent
\textbf{Acknowledgements.} We are grateful to V. Gizburg, J. Pecharich and 
D. Kaledin for the useful discussions. The first author is supported
by the Simons Collaboration Grant.

\section{Modules over a Lie algebroid.}

In this section we provide more details on the ``quasiclassical limit" for 
a deformation quantization. To that end, 
denote by $I \subset \mathcal{O}_X$ the ideal sheaf of $Y$ and recall 
the isomorphism $I/I^2 \simeq N^*$ of coherent sheaves on $Y$, 
while also $N^* \simeq T_Y$  since $Y$ is Lagrangian. 
Let $I_h \subset \Oh$ be the 
preimage of $I$ in $\Oh$, with respect to the quotient map 
$\Oh \to \mathcal{O}_X$. Assume that a deformation quantization $\Eh$ is 
given and fixed. We will work locally on $Y$ modulo $h^2$, assuming 
that local splittings $\Oh/(h^2) \simeq \mathcal{O}_Y + 
h  \mathcal{O}_Y$, $\Eh/ h^2 \Eh \simeq E + h E$
are given. For $\Oh$ these exist by \cite{Y} and for $\Eh$ the arugment
is similar, cf. the comments in the last section. 

Choosing a local section $a$ of $\mathcal{O}_X$ and a local section 
$e$ of $E$, we write the deformed action as 
$$
a * e = ae + h \gamma_1 (a, e)
$$
If $a \in I$ we see that $a * \Eh \subset \Eh$. Moreover, if 
$a_1, a_2 \in I$ then modulo $h^2$ we can write $a_1 a_2 = 
a_1 * a_2 - h \frac{1}{2} P(da_1, da_2)$ using our assumption on the 
product in $\Oh$. Since $P(da_1, da_2) \in I$ by the Lagrangian 
assumption on $Y$, this implies that $I^2 * \Eh \subset h^2 \Eh$. 
Therefore $(h \mathcal{O}_Y + I/I^2)$ sends $E \simeq \Eh/h \Eh$
to $E \simeq h \Eh/h^2 \Eh$, with $h a + b$ sending $e$ to 
$a e + \gamma_1(b, e)$.  
Writing out associativity equations $a* (b * e) = (a * b)* e$ and 
$b * (a * e) = (b * a ) * e$ and comparing them we get the two 
conditions 
$$
\gamma_1(b, ae) - a \gamma_1(f, e) = P(db, da) e; \qquad 
\gamma_1 (ab, e) - a \gamma_1(f, e) = \frac{1}{2} P(db, da) e.
$$
Observe also that $b \mapsto P(db, \cdot)$ is exactly the 
isomorphism $I/I^2 \simeq T_Y$. Therefore $ha + b$ acts on $E$ 
by a first order differential operator with scalar principal symbol
and we obtain (locally, at this moment) a map $\gamma: h \mathcal{O}_Y  + I/I^2 
\to \LL(E)$ with values in the sections of the Atiyah algebroid of $E$. 
This map agrees with Lie brackets if its source is given the bracket
induced by $(a, b) \mapsto P(db, da)$, $(b_1, b_2) \mapsto 
P(db_1, db_2)$. If we don's start with $\Eh$, just with a deformation 
modulo $h^2$, we need to assume existence of its extension modulo
$h^3$ to ensure agreement with the bracket. 
The map $\gamma$ is not $\mathcal{O}_Y$-linear but
satisfies 
$$
\gamma(f (ha + b)) - f \gamma(ha + b) = \frac{1}{2}P(db, f).
$$
To globalize this consider 
$$
\mathcal{L}(\Oh) = I_h/(I_h * I_h) \simeq I_Y \otimes_{\Oh} \mathcal{O}_Y 
\simeq \mathcal{T}or_1^{\mathcal{O}_h}
(\mathcal{O}_Y, \mathcal{O}_Y)
$$ 
and observe that $h \Oh \subset I_h$ and hence $h I_h \subset (I_h * I_h)$. 
This leads to a short exact sequence
$$
0 \to \mathcal{O}_Y \to \mathcal{L}(\Oh) \to T_Y \to 0
$$
where we use $\mathcal{O}_Y \simeq h \Oh / h I_h$ and $I_h/(h \Oh + I_h* I_h)
\simeq I/I^2 \simeq T_Y$. In other words, $\LL(\Oh)$ is a
a Picard algebroid on $Y$ in the sense of Section 2 in \cite{BB} with the 
bracket that descends from $(a, b) \mapsto \frac{1}{h}(a*b - b*a)$. Our
local computation above gives a morphism of Zariski sheaves 
$$
\gamma: \mathcal{L}(\Oh) \to \mathcal{L}(E)
$$
which agrees with the Lie bracket but satisfies 
$$
\gamma(fx) - f \gamma(x) = \frac{1}{2} \overline{x}(f)
$$
where $f$ is a locally defined function and $\overline{x}$ is the 
image of the local section $x$ of $\LL(\Oh)$ in $T_Y$. So 
$\gamma$ fails to be a morphism of $\mathcal{O}_Y$-modules. 

To repair the situation we use the fact that Picard algebroids form 
a vector space, i.e. for two algebroids $\LL_1, \LL_2$ and any 
pair of scalars $\lambda_1$, $\lambda_2$ there is a Picard algebroid
$\LL = \lambda_1 \LL_1 + \lambda_2 \LL_2$ and a morphism of sheaves
$$
s_{\lambda_1, \lambda_2}: \LL_1 \times_{T_Y} \LL_2 \to \LL,
$$
cf. Section 2.1. in \cite{BB}, 
which on the subbundle copies of $\mathcal{O}_Y$ is given by $(a_1, a_2)
\mapsto \lambda_1 a_1 + \lambda_2 a_2$, and on the quotient copies 
of $T_Y$ it is given by the identity. The other fact that we use is that
the Atiyah algebroid of the canonical bundle $\LL(K_Y)$ has
a non-$\mathcal{O}_Y$-linear splitting sending a vector 
fields $\partial$ to the Lie derivative $l(\partial)$ on top degree
differential forms, which satisfies
$$
l (f \partial) - f l(\partial) =  \partial(f),
$$
cf. Section 2.4 in \cite{BB}. So we consider the algebroid 
$$
\LL^+(\Oh) = \LL(\Oh) + \frac{1}{2} \LL(K_Y)
$$
where $\LL(K_Y)$ is the Aityah algebroid of the canonical bundle $K_Y$. 
The expression $x \mapsto s_{1, \frac{1}{2}}(x, l(\overline{x}) )$
defines an  isomorphism of Zariski sheaves
$$
\LL(\Oh) \to \LL^+ (\Oh)
$$
which agrees with the bracket, descends to identity on $T_Y$ but
fails to be $\mathcal{O}_Y$-linear in exactly the same way as it 
happens for $\LL(\Oh) \to \LL(E)$. Composing the inverse isomorphism 
with $\gamma$ we obtain a morphism of sheaves of $\mathcal{O}_Y$-modules
$$
\gamma^+: \LL^+(\Oh) \to \LL(E)
$$
which is now a morphism of Lie algebroids on $Y$.

Existence of $\gamma^+$ imposes strong restrictions on $E$. 
For instance, composing with the trace morphism 
$End_{\mathcal{O}_Y}(E) \to \mathcal{O}_Y$
we get a morphism $\mathcal{L}^+(\Oh) \to \mathcal{L}(\det(E))$  
to the Atiyah algebroid of the 
determinant bundle $\det(E)$. By construction, this map is multiplication 
by $r = rk(E)$ on the subbundles $\mathcal{O}_Y$ and identity 
on the quotient bundles $T_Y$. In particular, the map is an isomorphism
of Lie algebroids.
Since by \cite{BB} both Picard algebroids 
have characteristic classes in the truncated 
de Rham cohomology, we get
$$
c_1(E) = c_1(\det(E)) = r \cdot c(\mathcal{L}^+(\Oh))
$$
in $H^2_F(Y)$. Furthermore, taking the quotient of $\mathcal{L}(E)$
by the subbundle $\mathcal{O}_Y \subset End_{\mathcal{O}_Y}(E)$
we obtain the Atiyah algebroid  $\mathcal{L}(\mathbb{P}(E))$ 
of the associated $PGL(r)$ bundle $\mathbb{P}(E)$. By construction, 
$\gamma$ descends to a Lie morphism $T_Y \to \mathcal{L}(\mathbb{P}(E))$ which 
lifts the identity on $T_Y$. In other words, we have a flat algebraic 
connection on $\mathbb{P}(E)$. 

The above discussion can also be reversed (we are adjusting the 
argument in Section 2.3 of \cite{BB}): assume that the equation on 
$c_1$ in $H^2_F(Y)$ is satisfied and we are given a flat connection on 
the $PGL(r)$-bundle $\mathbb{P}(E)$ associated to $E$. 
Then we have a commutative diagram
$$
\xymatrix{ \mathcal{O}_Y \ar[r] \ar[d] & \mathcal{L}^+(\Oh) \ar[d] \ar[r] & T_Y \ar[d] \\
\mathcal{O}_Y \oplus End^0(E)  \ar[r] & \ar[r] \mathcal{L}(\det(E))
\oplus \mathcal{L}(\mathbb{P}(E)) & T_Y \oplus T_Y \\
}
$$
Here the lower row is understood as the sum of Atiyah algebras of 
$\det(E)$ and $\mathbb{P}(E)$. The map into the Atiyah algebra of $\det(E)$ is 
as before (identity on $T_Y$ and multiplication by $r$ on $\mathcal{O}_Y$). 
The map into the Atiyah algebra of $\mathbb{P}(E)$ is the composition of 
the projection to $T_Y$ and the splitting $T_Y \to \mathcal{L}(\mathbb{P}(E))$
given by the flat connection.
Observe that the quotient map $T_Y \to T_Y \oplus T_Y$ is just the 
diagonal morphism $\partial \mapsto (\partial, \partial)$. 

Now a local section of the Atiyah algebra of $E$ can be interpreted as
invariant vector field on the total space of the principal $GL(r)$-bundle
of $E$ and its direct image gives vector fields on the total spaces
of the $\C^*$-bundle and the $PGL(r)$-bundle, corresponding to $\det(E)$ 
and $\mathbb{P}(E)$, respectively. Indeed, these total spaces are 
obtained as quotients by the $SL(r)$ action and the $\C^*$ action, 
respectively. This gives an embedding of sheaves 
$$
\mathcal{L}(E) \to \mathcal{L} (\det(E)) \oplus \mathcal{L}(\mathbb{P}(E)) 
$$
which agrees with brackets and has image equal to the preimage of the 
diagonal in $T_Y \oplus T_Y$. This means that the earlier morphism from 
$\mathcal{L}^+(\Oh)$ lands into $\mathcal{L}(E)$, as required. 
We can summarize the discussion of this section as the following
\begin{prop}
A flat deformation quantization of $E$ modulo $h^2$ which admits an 
extension to a deformation modulo $h^3$, 
gives $E$ a structure of a module over the Lie algebroid $\mathcal{L}^+(\Oh)$.
with the class $\frac{1}{2} c_1(K_Y) + j^* \omega_1$ in $H^2_F(Y)$.
For an arbitrary vector bundle $E$ on $Y$ such a structure is equivalent to 
a choice of an isomorphism $\mathcal{L}^+(\Oh) \simeq \mathcal{L}(\det(E))$
as Lie algebroids on $Y$ which satisfies $f \mapsto r f$ on functions, 
and a flat $PGL(r)$ connection on the $PGL(r)$-bundle
$\mathbb{P}(E)$ associated to $E$. 
\end{prop}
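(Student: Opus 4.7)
The plan is to collect the ingredients that the section's preceding discussion has put in place, and verify that they assemble into a proof of the two assertions. For the first assertion, I would first check that the local formulas defining $\gamma : \mathcal{L}(\Oh) \to \mathcal{L}(E)$ are independent of the chosen splittings $\Oh/h^2 \simeq \mathcal{O}_Y \oplus h\mathcal{O}_Y$ and $\Eh/h^2 \simeq E \oplus hE$; the two identities describing the failure of $\mathcal{O}_Y$-linearity pin $\gamma$ down uniquely, so the local pieces glue. Bracket compatibility is the one step that actually uses the hypothesis of an extension modulo $h^3$, since it requires associativity of $*$ to one more order than is needed for $\gamma$ to be defined as a sheaf morphism. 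The twist by the canonical non-$\mathcal{O}_Y$-linear splitting $\partial \mapsto l(\partial)$ of $\mathcal{L}(K_Y)$ then produces the Lie algebroid morphism $\gamma^+ : \mathcal{L}^+(\Oh) \to \mathcal{L}(E)$, and the value of $c(\mathcal{L}^+(\Oh))$ follows from additivity of characteristic classes on Picard algebroids together with $c(\mathcal{L}(K_Y)) = c_1(K_Y)$ and $c(\mathcal{L}(\Oh)) = j^* \omega_1$ (the latter from \cite{BGKP}).

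For the equivalence in the second assertion, one direction is essentially immediate from what has just been done: composing $\gamma^+$ with the trace $End_{\mathcal{O}_Y}(E) \to \mathcal{O}_Y$ gives a Lie algebroid map $\mathcal{L}^+(\Oh) \to \mathcal{L}(\det E)$ which is multiplication by $r$ on $\mathcal{O}_Y$ and identity on $T_Y$, hence an isomorphism of Picard algebroids, while passing to the trace-zero part $End^0(E)$ yields a splitting $T_Y \to \mathcal{L}(\mathbb{P}(E))$ compatible with brackets, i.e. a flat $PGL(r)$-connection on $\mathbb{P}(E)$.

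The converse is the substantive content and uses the commutative diagram constructed just before the Proposition. Given an isomorphism $\mathcal{L}^+(\Oh) \simeq \mathcal{L}(\det E)$ and a flat $PGL(r)$-connection $T_Y \to \mathcal{L}(\mathbb{P}(E))$, we assemble the Lie algebroid morphism
\[
\mathcal{L}^+(\Oh) \longrightarrow \mathcal{L}(\det E) \oplus \mathcal{L}(\mathbb{P}(E))
\]
whose induced map on anchors is the diagonal $T_Y \to T_Y \oplus T_Y$. The key geometric input is that $\mathcal{L}(E)$ embeds into the direct sum as the preimage of this diagonal: an invariant vector field on the total space of the principal $GL(r)$-bundle of $E$ is exactly the datum of a pair of invariant vector fields on the $\C^*$- and $PGL(r)$-bundles of $\det E$ and $\mathbb{P}(E)$ that project to the same vector field on $Y$. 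Consequently the map above factors through $\mathcal{L}(E)$, producing the required $\gamma^+$.

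The step that I expect to require the most care is the verification that this factorization is a morphism of Lie algebroids, not merely of $\mathcal{O}_Y$-modules: this reduces to checking that the embedding $\mathcal{L}(E) \hookrightarrow \mathcal{L}(\det E) \oplus \mathcal{L}(\mathbb{P}(E))$ respects brackets, which is clear from its description via invariant vector fields on principal bundles, but must be combined with the observation that the bracket on $\mathcal{L}^+(\Oh)$ matches the diagonal bracket on the image. A second point to watch is that consistency of the data with the identity $c(\mathcal{L}^+(\Oh)) = \frac{1}{r} c_1(E)$ is precisely what guarantees the isomorphism $\mathcal{L}^+(\Oh) \simeq \mathcal{L}(\det E)$ postulated in the statement; without it no such isomorphism is available, and only the flat $PGL(r)$-connection would survive.
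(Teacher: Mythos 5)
Your proposal follows essentially the same route as the paper's own argument: the local construction of $\gamma$ and its bracket compatibility (using the extension modulo $h^3$), the twist by the Lie-derivative splitting of $\frac{1}{2}\mathcal{L}(K_Y)$ to obtain $\gamma^+$, the trace map and passage to $\mathbb{P}(E)$ for the forward direction, and the identification of $\mathcal{L}(E)$ inside $\mathcal{L}(\det E)\oplus\mathcal{L}(\mathbb{P}(E))$ as the preimage of the diagonal in $T_Y\oplus T_Y$ for the converse. The points you flag as needing care (gluing of the local $\gamma$, bracket compatibility of the embedding via invariant vector fields on principal bundles) are exactly the ones the paper relies on, so the plan is sound and matches the intended proof.
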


\section{Harish Chandra pairs.}

We recall here some definitions and facts about transitive Harish Chandra torsors
which may be found e.g. in \cite{BK}. 

\begin{defn} A \textit{Harish Chandra} pair $(G, \mathfrak{h})$ over $\C$ is a pair 
consisting of a connected (pro)algebraic group 
$G$ over $\C$, a Lie algebra $\mathfrak{h}$ 
over $\C$ with a $G$-action and an embedding $\mathfrak{g} = Lie(G) \subset \mathfrak{h}$
such that the adjoint action of $\mathfrak{g}$ on $\mathfrak{h}$ is the 
differential of the given $G$-action.
\end{defn}

\bigskip
\noindent
The reasons why Harish Chandra pairs are relevant to our problem will be 
explained in the next section while here we introduce the main Harish Chandra 
pair of this paper. 
Let $n = \frac{1}{2}\dim_\C X$ and consider the Weyl algebra $\D$ isomorphic to
$\mathbb{C}[[x_1,\ldots,x_n,y_1,\ldots,y_n,\h]]$ as a vector space but with relations
$[y_i,x_i]=\delta_{ij}\h,[\h,x_i]=[\h,y_j] = [x_i, x_j] = [y_i, y_j] =0$. We view it 
as a ``formal local model" for  $\mathcal{O}_h$.
The ``formal local model" for $\Eh$ is the $\D$-module $\M_r$  
isomorphic to $\mathbb{C}[[x_1, \ldots, x_n, h]]^{\oplus r}$ as a vector space, on which
$x_i, h$ act by multiplication while $y_j$ acts by $h \frac{d}{dx_j}$.

Now consider the group $\Aut(\D, \M_r)$ 
of automorphisms of the pair $(\D, \M_r)$, 
each consisting of a $\Ch$-algebra automorphism $\Phi: \D \to \D$ preserving the maximal ideal 
$\mathfrak{m} = \langle x_i, y_j, h \rangle$, plus a $\Ch$-linear invertible map 
$\Psi: \M_r \to \M_r$ which satisfies $\Psi(am) = \Phi(a) \Psi(m)$ for $a \in \D$, $m \in \M_r$. 

Its Lie algebra 
of derivations of the pair $(\D, \M_r)$ can be expanded by adding derivations which may not
preserve $\mathfrak{m}$. For instance, although the element $\frac{1}{h} y_j$ is not in $\D$, 
the commutator $[\frac{1}{h} y_j, \cdot]$ does define a derivation of $\D$ which sends
e.g. $x_j$ to 1. The same element acts as $\frac{d}{dx_j}$ on $\M_r$.
Therefore, we consider the Lie algebra $\Der(\D, \M_r)$ which consists of 
pairs $\phi: \D \to \D$, $\psi: \M_r \to \M_r$ of $\Ch$-linear maps, where $\phi$ is a
derivation and $\psi$ satisfies $\psi(am) = \phi(a) m + a \psi(m)$. 
This gives a Harish Chandra pair $\langle \Aut(\D,\M_r),\Der(\D,\M_r)\rangle$.

Note that not all automorphisms $\Phi$ and not all derivations $\phi$ can be extended to  
 $(\Phi, \Psi)$ or $(\phi, \psi)$, respectively. Indeed, the quotient module 
 $M_r = \M_r/ h \M_r$ is annihilated by the ideal $\J \subset \D$ generated by $y_j$ 
 and $h$ (which is the ``formal local model" for $I_h$). Since both $\Psi$
and $\psi$ would have to preserve $h \M_r$ and hence to descend to $M_r$ we conclude that 
$\Phi$ and $\phi$ would have to preserve $\J$. We denote by $\Aut(\D)_\J$ the subgroup of
$\Aut(\D)$ formed by automorphisms which preserve $\J$ and similarly by $\Der(\D)_\J
\subset \Der(\D)$ the subalgebra of derivations preserving $\J$.

\begin{lemma} 
There is an extension of Harish Chandra pairs $$1\rightarrow\langle GL_{\h}(r),\mathfrak{gl}_{\h}(r)\rangle\rightarrow\langle \Aut(\D,\M_r),\Der(\D,\M_r)\rangle\rightarrow\langle\Aut(\D)_{\J}, \Der(\D)_{\J}\rangle\rightarrow 1.$$
Moreover, $\Der(\D, \M_1) \simeq \frac{1}{h}\J$ where the right hand side is 
considered with commutator bracket. 
\end{lemma}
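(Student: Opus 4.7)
My plan is to define the projection maps in the claimed extension, identify their kernels using the basic fact $\End_\D(\M_r)=\mathrm{Mat}_r(\Ch)$, and then produce lifts; the isomorphism $\Der(\D,\M_1)\simeq\frac{1}{\h}\J$ is the key tool for surjectivity at the Lie algebra level.

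First I would introduce the projections $(\Phi,\Psi)\mapsto\Phi$ and $(\phi,\psi)\mapsto\phi$. The image lies in $\Aut(\D)_\J$ (resp.\ $\Der(\D)_\J$) because $\Psi$ is $\Ch$-linear, hence preserves $\h\M_r$ and descends to $\M_r/\h\M_r\simeq\C[[x_1,\ldots,x_n]]^{\oplus r}$, whose two-sided annihilator in $\D$ is precisely $\J$; the compatibility $\Psi(am)=\Phi(a)\Psi(m)$ then forces $\Phi(\J)\subset\J$, and likewise for derivations. The kernel consists of pairs with $\Phi=1$ (resp.\ $\phi=0$), in which case $\Psi$ (resp.\ $\psi$) is $\D$-linear. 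A $\D$-linear endomorphism of $\M_1$ is determined by the image $f\in\C[[x,\h]]$ of the cyclic vector $1$, and the relation $y_j\cdot 1=0$ forces $\h\partial_{x_j}f=0$, hence $f\in\Ch$. Therefore $\End_\D(\M_r)=\mathrm{Mat}_r(\Ch)$, whose unit group is $GL_\h(r)$ and whose Lie algebra is $\glh(r)$.

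For Lie algebra surjectivity together with the second statement, I would consider the map $\alpha/\h\mapsto([\alpha/\h,-],\,\alpha/\h\cdot-)$ from $\frac{1}{\h}\J$ to $\Der(\D,\M_1)$. It is well-defined: $\D/\h\D$ is commutative so $[\D,\D]\subset\h\D$, making $[\alpha/\h,-]$ a genuine $\Ch$-linear derivation of $\D$, while $\J\cdot\M_1\subset\h\M_1$ (since $y_j$ acts as $\h\partial_{x_j}$) makes multiplication by $\alpha/\h$ a well-defined operator on $\M_1$; the Leibniz rule is immediate, and a direct check on generators shows $[\alpha,\J]\subset\h\J$ for $\alpha\in\J$, so the derivation preserves $\J$. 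Injectivity follows because a kernel element is central in $\D[\h^{-1}]$ and acts as zero on $\M_1$. For surjectivity I would use that every $\Ch$-linear derivation of the formal Weyl algebra $\D$ is inner of the form $[\beta/\h,-]$ with $\beta\in\D$ unique modulo $\Ch$, proved by lifting a Hamiltonian vector field on $\C[[x,y]]$ order by order in $\h$; the condition $\phi(\J)\subset\J$ then forces the classical part $\beta|_{y=\h=0}\in\C[[x]]$ to be constant, as its $x$-derivatives appear in the $\C[[x]]$-part of $[\beta/\h,y_j]\in\J$, hence $\beta\in\J+\Ch$ and $\phi$ is hit from $\frac{1}{\h}\J$. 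For general $r$, the diagonal action on $\M_r\simeq\M_1^{\oplus r}$ lifts any $\phi\in\Der(\D)_\J$ to $(\phi,\alpha/\h\cdot-)\in\Der(\D,\M_r)$.

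The main obstacle will be group-level surjectivity of $\Aut(\D,\M_r)\to\Aut(\D)_\J$. I would filter $\Aut(\D)_\J$ by the congruence subgroups of automorphisms congruent to the identity modulo $\m^n$; the associated graded is prounipotent, and on it any element can be lifted by exponentiating the Lie algebra result from the previous paragraph. The finite-dimensional ``linear part'' of $\Aut(\D)_\J$, namely automorphisms acting on $\m/\m^2$ preserving the subspace $\J/\m^2$ and cutting out a parabolic subgroup of the symplectic group on the $x,y$ variables, is handled separately by noting that it acts through a formal change of the $x$ coordinates that extends tautologically to $\M_r=\C[[x,\h]]^{\oplus r}$. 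Composing these two lifts with the Lie algebra surjectivity yields the claimed extension of Harish Chandra pairs.
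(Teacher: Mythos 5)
Your proposal is correct, and its skeleton coincides with the paper's: the kernel is computed exactly as in the text (a $\D$-linear endomorphism of $\M_r$ is determined by its values on generators annihilated by the $y_j$, and $y_j\cdot 1=0$ together with $h$-torsion-freeness of $\M_r$ forces those values to lie in $\Ch$, giving $GL_\h(r)$, resp.\ $\glh(r)$), and surjectivity for general $r$ is reduced to $r=1$ via the diagonal map $\Psi\mapsto\Psi^{\oplus r}$. The genuine difference is that the paper outsources the entire $r=1$ input --- both the surjectivity and the identification $\Der(\D,\M_1)\simeq\frac{1}{\h}\J$ --- to Corollary 3.1.8 of \cite{BGKP}, whereas you prove it from scratch: the map $\alpha/\h\mapsto\bigl([\alpha/\h,-],\,\alpha/\h\cdot(-)\bigr)$ is well defined because $[\D,\D]\subset\h\D$ and $\J\M_1\subset\h\M_1$, it lands in derivations preserving $\J$ because $[\J,\J]\subset\h\J$, and surjectivity follows from the fact that every $\Ch$-linear derivation of the formal Weyl algebra is of the form $[\beta/\h,-]$ with $\beta$ unique modulo $\Ch$, the condition $\phi(\J)\subset\J$ then forcing the classical part of $\beta$ in $\C[[x]]$ to be constant. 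This buys a self-contained argument at the cost of two points worth spelling out: (i) after matching the derivation components you must still absorb the discrepancy of the module components, i.e.\ $\psi-\alpha/\h\cdot(-)$ is $\D$-linear, hence by your kernel computation multiplication by some $c\in\Ch$, which is accounted for by replacing $\alpha$ with $\alpha+c\h$ --- this is what actually completes $\Der(\D,\M_1)\simeq\frac{1}{\h}\J$; (ii) for group-level surjectivity your ``linear part'' is the Levi $GL(n)\times\C^*$ of the Siegel parabolic stabilizing the Lagrangian $\langle y_1,\dots,y_n\rangle$, which indeed lifts tautologically to $\C[[x_1,\dots,x_n,\h]]^{\oplus r}$, while the pro-unipotent part exponentiates from the degree $\geq 1$ part of the Lie algebra (the non-integrable degree $-1$ directions $\frac{1}{\h}y_j$ play no role here). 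Neither point is a gap, only missing detail.
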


\medskip
\noindent 
\begin{proof} For $r=1$ this is proved in Corollary 3.1.8 of \cite{BGKP}. 
Since $\M_r \simeq \M_1^{\oplus r}$
and we have the map $Aut(\D, \M_1) \to Aut(\D, \M_r)$ sending $(\Phi, 
\Psi)$ to $(\Phi, \Psi^{\oplus r})$. Hence any lift for $\Phi \in Aud(D)_\J$
to $Aut(\D, \M_1)$ also gives a lift of the same $\Phi$ to $Aut(\D, \M_r)$. Therefore the right 
arrow is surjective. Its kernel is the group of automorphisms of $\M_r$ as a $\D$-module. Every 
such automorphism is uniquely determined by its value on generators, and we can choose 
a set of generators on which $y_j$ act by zero. Then their images are independent on $x_i$, 
which means that the automorphism if given by an invertible $r \times r$ matrix with entries in 
$\Ch$. 

The proof in the case of Lie algebras is entirely similar.
\end{proof}

\bigskip
\noindent
\textbf{Remark.}
Perhaps it will help the reader to understand the Lie algebra $\frac{1}{h} \J$ if we introduce
the grading in which $\deg h = 3$, $\deg y_j = 2$ and $\deg x = 1$ (then the Lie algebra will 
be an infinite direct product, not a direct sum, of its homogeneous components). 

Degree $-1$ component is spanned by the elements $\frac{1}{h}ad(y_1), \ldots, \frac{1}{h} ad(y_n)$, 
and this gives the non-integrable part of the Lie algebra. Degree $\geq 0$ part is the tangent Lie
algebra of $\Aut(\D, \M_1)$. Degree 0 component is spanned by 1 and $\frac{x_i y_j}{h}$, which gives
the tangent Lie algebra of the reductive part 
$\C^* \times GL(n)$. Degree $\geq 1$ part is the pro-nilpotent subalgebra
which contains, in particular, the elements of $h \C_h$.

Similar grading exists on $\Der(\D, \M_r)$ if we place $\mathfrak{gl}(r)$ in degree zero.

\bigskip
\noindent
The following diagram for Lie algebras and its group analogue  will be fundamental in our
analysis of the main Harish Chandra pair $\langle Aut(\D, \M_r), \Der(\D, \M_r)\rangle$:

$$
\xymatrix{0\ar"1,2"&\mathbb{C}_{\h}\ar"2,2"\ar"1,3"&\frac{1}{\h} \J\ar"2,3"\ar"1,4"&\Der(\D)_{\J}\ar"1,5"\ar@{=}"2,4"&0\\
0\ar"2,2"&\mathfrak{gl}_{\h}(r)\ar"2,3"&\Der(\D, \M_r)\ar"2,4"&\Der(\D)_{\J}\ar"2,5"&0}
$$
where the middle arrow uses sends $\alpha \in \frac{1}{h} \J$ to the pair $(\phi, \psi)$
with $\phi = [\alpha, \cdot]$ and $\psi = \alpha (\ldots)$. Note that in both cases the maps are
well defined, i.e. a possible denominator cancels out. Since the images of $\frac{1}{\h} \J$ 
and $\mathfrak{gl}_h(r)$ commute in $\Der(\D, \M_r)$, the following lemma is immediate
\begin{lemma}
$$
\Der(\D, \M_r) \simeq 
\big[ \frac{1}{\h} \J  \oplus 
\mathfrak{gl}_h(r)\big] / \mathbb{C}_{\h}
\simeq \frac{1}{\h} \J \oplus \mathfrak{pgl}_h(r). \qquad \square
$$
\end{lemma}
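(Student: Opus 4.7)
The plan is to construct an explicit Lie algebra map
$$F: \tfrac{1}{\h}\J \oplus \mathfrak{gl}_\h(r) \longrightarrow \Der(\D,\M_r), \qquad (\alpha,A) \mapsto \bigl([\alpha,\cdot]_\D,\ m \mapsto \alpha m + Am\bigr),$$
and show its kernel is the antidiagonal copy of $\C_\h$ given by $c \mapsto (c,-cI_r)$. The second isomorphism will then follow from the standard splitting $\mathfrak{gl}_\h(r) \simeq \C_\h\cdot I_r \oplus \mathfrak{sl}_\h(r)$ (valid since $r$ is invertible in $\C$), together with $\mathfrak{pgl}_\h(r)\simeq \mathfrak{sl}_\h(r)$.

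First I would verify that $F$ is a well-defined homomorphism of Lie algebras. The derivation identity $\psi(am) = \phi(a)m + a\psi(m)$ holds for the $\alpha$-summand by the Leibniz rule for the inner derivation $[\alpha,\cdot]$ on $\D$ (acting on $\M_r$ by left multiplication via $\alpha$, which makes sense since $\tfrac{1}{\h}\J$ acts on $\M_r$ as $\C_\h$-linear differential operators), and holds trivially for $A$, whose derivation part is zero. Bracket compatibility reduces, beyond the known brackets inside each summand, to the fact already highlighted in the paragraph preceding the lemma: the image of $\mathfrak{gl}_\h(r)$ kills $\D$, and matrix multiplication by $A$ commutes with the action of $\tfrac{1}{\h}\J$ on $\M_r$.

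Next I would establish surjectivity by a diagram chase against the given commutative diagram. For $(\phi,\psi)\in\Der(\D,\M_r)$, the image $\phi$ lies in $\Der(\D)_\J$ and hence lifts along the top row to some $\alpha\in\tfrac{1}{\h}\J$. Then $\psi-\alpha\cdot$ is a $\C_\h$-linear endomorphism of $\M_r$ whose associated derivation vanishes, so it is $\D$-linear, and the kernel argument from the proof of the previous lemma (reducing $\D$-linear maps $\M_r\to\M_r$ to their action on a generating set annihilated by the $y_j$) identifies it with matrix multiplication by a unique $A\in\mathfrak{gl}_\h(r)$. Hence $F(\alpha,A)=(\phi,\psi)$.

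For the kernel, if $F(\alpha,A)=0$ then $[\alpha,\cdot]=0$ places $\alpha$ in the kernel of $\tfrac{1}{\h}\J\to\Der(\D)_\J$, which the top row identifies with $\C_\h$; with $\alpha\in\C_\h$ acting on $\M_r$ as a scalar, vanishing of the second component forces $A=-\alpha I_r$. This yields the first isomorphism. For the second, after quotienting by the antidiagonal the pair of central summands $\C_\h\subset\tfrac{1}{\h}\J$ and $\C_\h\cdot I_r\subset\mathfrak{gl}_\h(r)$ collapse into a single copy of $\C_\h$, and using the splitting $\mathfrak{gl}_\h(r) = \C_\h\cdot I_r \oplus \mathfrak{sl}_\h(r)$ I can absorb that $\C_\h\cdot I_r$ factor into the $\C_\h\subset\tfrac{1}{\h}\J$ summand, leaving $\tfrac{1}{\h}\J\oplus\mathfrak{pgl}_\h(r)$ exactly. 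The only step with any real content is surjectivity, which rides on the previous lemma's identification of $\D$-linear automorphisms of $\M_r$; everything else is formal diagram chasing and a standard splitting, so I expect no serious obstacle beyond careful bookkeeping of the antidiagonal identification.
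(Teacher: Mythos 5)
Your argument is correct and is exactly the argument the paper intends: the paper declares the lemma ``immediate'' from the two-row diagram together with the fact that the images of $\frac{1}{\h}\J$ and $\mathfrak{gl}_\h(r)$ commute, and your map $F$, the diagram-chase surjectivity via the previous lemma's description of $\D$-linear endomorphisms of $\M_r$, and the antidiagonal kernel are precisely the details being suppressed. No discrepancy to report.
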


\bigskip
\noindent
In the group case we have a diagram 
$$\xymatrix{1\ar"1,2"&\mathbb{C}_{\h}^*\ar"2,2"\ar"1,3"&\Aut(\D, \M_1)\ar"2,3"\ar"1,4"&\Aut(\D)_{\J}\ar"1,5"\ar@{=}"2,4"&1\\
1\ar"2,2"&GL_{\h}(r)\ar"2,3"&\Aut(\D,\M_r)\ar"2,4"&\Aut(\D)_{\J}\ar"2,5"&1}$$
but the corresponding splitting fails in the constant terms with respect to $h$: 
$\Aut(\D, \M_r)$ has a subgroup $GL(r)$ which does not split as $PGL(r) \times \C^*$. 
However, there is a slightly different splitting on the group level, and the interplay 
between the two splittings is key to our later arguments on deformation quantization. 

\begin{lemma} We have a splitting 
$$
\Aut(\D, \M_r) \simeq \big[\Aut(\D, \M_1)/\C^*\big] \times 
\big[GL(r) \ltimes exp(h \cdot \mathfrak{pgl}_h(r))\big]
$$
where $exp(h \cdot \mathfrak{pgl}_h(r))$ is the pro-unipotent kernel of the 
evaluation map $PGL_h(r) \to PGL(r)$, $h \mapsto 0$. The two splittings (of the group and the 
Lie algebra) agree modulo $\langle \C^*, \C\rangle$: 
$$
\langle \Aut(\D, \M_r), \Der(\D, \M_r)/
\langle \C^*, \C \rangle \simeq
\langle \Aut(\D, \M_1)/\C^*, \frac{1}{h}\J/\C \rangle \times \langle PGL_h(r), \mathfrak{pgl}_h(r) \rangle 
$$
\end{lemma}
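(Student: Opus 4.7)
The strategy is to realize $\Aut(\D, \M_r)$ as a central product of $\Aut(\D, \M_1)$ and $GL_h(r)$ over their common center $\C_h^*$, and then peel off the two halves $\exp(h\Ch)$ and $\C^*$ of the center using different splittings on each side. First, I construct two commuting embeddings into $\Aut(\D, \M_r)$: the block-diagonal map $(\Phi, \Psi) \mapsto (\Phi, \Psi^{\oplus r})$ from $\Aut(\D, \M_1)$, and the matrix action of $GL_h(r)$ on $\M_r = \M_1^{\oplus r}$. These commute because matrix multiplication on the $r$-tuple slot is $\Ch$-linear and therefore commutes with componentwise application of $\Psi$; their intersection is the group of $\D$-module automorphisms of $\M_1$, which equals $\C_h^*$. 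Using the factorization from the preceding lemma (lift $\Phi \in \Aut(\D)_\J$ to $\Aut(\D, \M_1)$ and correct by the element $\Psi_r (\Psi^{\oplus r})^{-1} \in GL_h(r)$), the combined multiplication map $\Aut(\D, \M_1) \times GL_h(r) \to \Aut(\D, \M_r)$ is surjective with kernel the anti-diagonal $\{(z, z^{-1}) : z \in \C_h^*\}$.

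Next I decompose $\C_h^* = \C^* \times \exp(h\Ch)$ via the $h$-adic splitting $z = z|_{h=0} \cdot (z / z|_{h=0})$. On the $GL_h(r)$ side, exponentiating the Lie algebra decomposition $\mathfrak{gl}_h(r) = \Ch \oplus \mathfrak{pgl}_h(r)$ (scalars plus traceless part) yields a direct factorization $GL_h(r) \simeq \exp(h\Ch) \times [GL(r) \ltimes \exp(h \mathfrak{pgl}_h(r))]$ with $\exp(h\Ch)$ central. Identifying this $\exp(h\Ch)$ with the $\exp(h\Ch) \subset \C_h^* \subset \Aut(\D, \M_1)$ across the anti-diagonal, one quotients $\exp(h\Ch)$ out of the $GL_h(r)$ factor while keeping it inside $\Aut(\D, \M_1)$, reducing the presentation to a central product over $\C^*$ alone:
$$
\Aut(\D, \M_r) \simeq \Aut(\D, \M_1) \cdot_{\C^*} \big[GL(r) \ltimes \exp(h \mathfrak{pgl}_h(r))\big].
$$

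The most delicate step is to convert this central product over $\C^*$ into a direct product. The lemma itself warns that the extension $\C^* \to GL(r) \to PGL(r)$ does not split, so absorbing $\C^*$ into the second factor is not available; instead one uses the opposite side. The degree-zero reductive part of $\Aut(\D, \M_1)$ is $\C^* \times GL(n)$ by the Remark above, and this central $\C^*$ extends to a direct decomposition $\Aut(\D, \M_1) \simeq \C^* \times [\Aut(\D, \M_1)/\C^*]$ of the full pro-algebraic group, since $\C^*$ is central and already split off from the reductive Levi factor while the pro-unipotent radical is unaffected. With this in hand, the anti-diagonal $\C^*$ sits in the first $\C^*$-factor of $\Aut(\D, \M_1)$ and cancels, leaving the claimed direct product. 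Finally, the Harish Chandra compatibility modulo $\langle \C^*, \C \rangle$ follows by passing to tangent Lie algebras: quotienting $GL(r) \ltimes \exp(h\mathfrak{pgl}_h(r))$ by its central $\C^*$ gives $PGL(r) \ltimes \exp(h\mathfrak{pgl}_h(r)) = PGL_h(r)$ with Lie algebra $\mathfrak{pgl}_h(r)$, while the first factor becomes $\langle \Aut(\D, \M_1)/\C^*, \frac{1}{h}\J/\C \rangle$, matching the Lie algebra splitting of the preceding lemma. I expect the $\C^*$-splitting of $\Aut(\D, \M_1)$ to be the main obstacle, requiring a closer look at the reductive Levi subgroup than the other steps.
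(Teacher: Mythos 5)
Your argument is correct, and it rests on the same two structural facts as the paper's proof: the images of $\Aut(\D,\M_1)$ and $GL_h(r)$ commute in $\Aut(\D,\M_r)$ and intersect in $\C_h^*$, and the relevant pro-algebraic groups admit Levi decompositions into a finite-dimensional reductive part times a pro-unipotent radical. The organization, however, is genuinely different. The paper dismisses the first displayed splitting as ``following from the diagram and the commuting images,'' and spends its effort on the second splitting, which it proves by exhibiting both sides of the mod-$\C^*$ isomorphism as semidirect products with the same reductive part $PGL(r)\times GL(n)$ and the same pro-unipotent radical (the latter identified through the Lie algebra computation of the preceding lemmas). You instead build the first splitting constructively: central product over $\C_h^*$, factor $\C_h^* = \C^*\times\exp(h\Ch)$, absorb $\exp(h\Ch)$ into the $GL_h(r)$ side via $GL_h(r)\simeq \exp(h\Ch)\times[GL(r)\ltimes\exp(h\,\mathfrak{pgl}_h(r))]$, and split the residual $\C^*$ off the $\Aut(\D,\M_1)$ side using its Levi decomposition $(\C^*\times GL(n))\ltimes U$ with $\C^*$ central; the second splitting then falls out by a further quotient. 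This buys something the paper's terse treatment does not make explicit, namely a justification of the first isomorphism itself (the commuting images alone only give a central product over $\C_h^*$, not a direct product), and you correctly isolate the one nontrivial input --- that $\C^*$ is a direct factor of $\Aut(\D,\M_1)$ --- which is precisely where the paper's Levi-decomposition argument enters in its proof of the second statement. The price is that your route leans on the surjectivity and kernel computations of the earlier extension lemma in a couple of places (lifting $\Phi$ and identifying $\Psi_r(\Psi^{\oplus r})^{-1}$ as a matrix), but those are available and you invoke them correctly.
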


\medskip
\noindent
\begin{proof} The first statement is follows from the diagram before the lemma and the 
fact that the images of $GL_h(r)$ and $\Aut(\D, \M_1)$ in $\Aut(\D, \M_r)$
commute. To prove the splitting mod $\C^*$, observe the that pro-algebraic groups 
on both sides are semidirect product of finite dimensional reductive subgroups
and infinite dimensional pro-unipotent groups. The reductive part on the left hand side
is $GL(r) \times GL(n)$ where $GL(r)$ acts on the generators of $\M_r$ and 
$GL(n)$ on the variables $x_1, \ldots, x_n \in \D$, with the dual action on 
$y_1, \ldots, y_n$. The copy of $\C^*$ acts by scalar automorphisms on $\M_r$ only. Thus, 
the reductive part of the quotient on the left hand side is $PGL(r) \times GL(n)$. The 
same argument repeated for $r=1$ shows that the reductive part on the right is the same. 
The isomorphism of tangent Lie algebras on the pro-unipotent parts follows from the 
previous lemmas. Since we are taking quotients by the central copy of $\C^*$ which 
acts trivially on the Lie algebras, the semidirect products match as well.
\end{proof}

\medskip
\noindent
Informally we could say that on the group level there is an extra copy 
of $\C^* \subset GL(r)$ which on the level of Lie algebras migrates to 
the other factor 
$\C \subset \frac{1}{h} \J \simeq \Der(\D, \M_1)$ but 
after taking the quotient by these, the two splittings match.

\section {Transitive Harish Chandra Torsors}

The concept of a $G$-torsor over a smooth variety $X$ can be extended to Harish-Chandra
pairs. We will only need the special case of a \textit{transitive} Harish Chandra torsor. 
Note that the definition below implies that $\dim(\mathfrak{h}/\mathfrak{g})$ is
finite and equal to $\dim_\C Y$, as is the case with the main Harish Chandra
pair considered in the previous section. 

\begin{defn}
A \textit{transitive Harish-Chandra torsor} (or tHC torsor for short) over a Harish Chandra pair $(G, \mathfrak{h})$ on  a smooth variety $Y$ 
is a $G$-torsor $P \to Y$ together with a Lie algebra morphism
$\mathfrak{h} \to \Gamma(P, T_P)$ which induces an isomorphism of vector 
bundles $h \otimes_\C \mathcal{O}_P \simeq T_P$. 

This data can be rephrased in terms of the Atiyah algebra $\LL(P)$
of $G$-invariant vector fields on $P$, viewed as a Lie algebroid on $Y$
$$
0 \to Ad(P) = P_{\mathfrak{g}} \to \LL(P) \to T_X \to 0
$$
For a tHC torsor $P$ we have  an isomorphism of locally free
sheaves on $Y$:
$$
A: \LL(P) \simeq P_{\mathfrak{h}}
$$
which restricts to identity on $Ad(P) = P_{\mathfrak{g}}$. This isomorphism 
does not agree with the Lie bracket but instead $P_{\mathfrak{h}}$ has a 
flat algebraic connection such that
$$
A[x, y] - [Ax, Ay] = \partial_x Ay - \partial_y Ax 
$$
where $x \mapsto \partial_x$ is the anchor map of $\LL(P)$.
\end{defn}

\medskip
\noindent 
\textbf{Remarks.} 

(1) Note that the sheaf of sections of $P_{\mathfrak{h}}$ 
has a Lie bracket since the action of $G$ preserves the bracket on $\mathfrak{h}$.

(2) In general, if $V$ is a representation of $G$ with the infinitesimal 
action of $\mathfrak{g}$
extended to that of $\mathfrak{h}$, for a transitive Harish-Chandra torsor $P$
the associated vector bundle $P_V = P \times_G V$ has a flat algebraic
connection, see Section 2.3 in in \cite{BK}.

Let $X$ be a $2n$-dimentional symplectic variety over $k$ with a symplectic form $\Omega$. 
Assume that a deformation quantization $\mathcal{O}_h$ of $\mathcal{O}_X$ is 
given, such that its non-commutative period is given by $\omega(t) = [\Omega] 
+ t \omega_1 + t^2 \omega_2 + \ldots \in H^2_{DR}(X) \otimes \Ch$. 

Assume that a smooth Lagrangian subvariety $Y \subset X$ is given, with a rank $r$ vector bundle
$E$ which we consider as a sheaf of $\mathcal{O}_X$-modules via the direct image functor. 
Assume further that $x \in Y$ is a point and in some neighborhood of $x \in X$ we are 
given a deformation quantization $\Eh$ of $E$. 

The stalk $\mathcal{O}_{h, x}$ is a non-commutative ring with a maximal ideal $\mathfrak{m}_x$, 
which is the preimage of the maximal ideal $\mathfrak{n}_x$ 
in the commutative local ring 
$\mathcal{O}_{h, x}/h \mathcal{O}_{h, x} \simeq \mathcal{O}_{X, x}$. 
Let $\widehat{\mathcal{O}}_{h, x}$ and $\widehat{\mathcal{E}}_{h, x}$ be completions of 
stalks with respect to this maximal ideal. 

\begin{lemma}
There exist isomorphisms $\eta: \widehat{\mathcal{O}}_{h, x} \simeq \D$, $\mu: \widehat{\mathcal{E}}_{h, x} \simeq \M_r$ of topological $\Ch$-modules 
which are compatible with filtrations and action of rings on corresponding modules.
Moreover, the first isomorphism may be chosen in such a way that the images of 
$y_1, \ldots, y_n$ in $\widehat{\mathcal{O}}_{X, x}$ come from 
a regular sequence in 
$\mathcal{O}_{X, x}$ generating the ideal $I_x$ of functions vanishing on $Y$. 
\end{lemma}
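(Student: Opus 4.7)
\medskip
\noindent
\textbf{Proof strategy.}
The plan is to build $\eta$ first by a formal quantum Darboux argument adapted to the Lagrangian $Y$, and then to construct $\mu$ by choosing module generators annihilated by the quantized equations of $Y$. The main obstacle will be the quantum Darboux step; the module step then reduces to a Koszul-type argument, and filtration/completeness compatibilities follow automatically.

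Classically, since $Y$ is smooth of codimension $n$ in the $2n$-dimensional $X$ at $x$, I can choose a regular sequence $f_1, \ldots, f_n \in \mathcal{O}_{X,x}$ generating $I_x$. The Lagrangian hypothesis gives $\{f_i, f_j\} \in I$, and a perturbation inside the formal completion can be used to arrange $\{f_i, f_j\} = 0$ exactly in $\widehat{\mathcal{O}}_{X,x}$. The formal Weinstein--Darboux theorem then produces a complementary system $g_1, \ldots, g_n$ with $\{f_i, g_j\} = \delta_{ij}$ and $\{g_i, g_j\} = 0$, yielding a Poisson isomorphism $\widehat{\mathcal{O}}_{X,x} \simeq \C[[x_1, \ldots, x_n, y_1, \ldots, y_n]]$ sending $g_j \mapsto x_j$, $f_i \mapsto y_i$, with the ideal of $Y$ landing in $(y_1, \ldots, y_n)$.

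To lift this to the quantization, pick arbitrary lifts $X_j^{(0)}, Y_i^{(0)} \in \widehat{\mathcal{O}}_{h,x}$ of $g_j$ and $f_i$. Their commutators $[Y_i^{(0)}, X_j^{(0)}] - \delta_{ij} h$, $[Y_i^{(0)}, Y_j^{(0)}]$ and $[X_i^{(0)}, X_j^{(0)}]$ all lie in $h^2 \widehat{\mathcal{O}}_{h,x}$ since their quasiclassical limits vanish. I would correct the lifts recursively in powers of $h$: the obstruction at step $N$ is a Hochschild $2$-cocycle for the formal Weyl algebra and is a coboundary (by the vanishing of the relevant Hochschild cohomology of the formal polydisk, or by a direct Moyal-type computation), yielding the needed adjustment. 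This is the formal quantum Darboux theorem in the style of Fedosov/Bezrukavnikov--Kaledin. The resulting $X_j, Y_i$ satisfy the Weyl relations exactly, so the universal property of $\D$ produces a continuous $\Ch$-algebra morphism $\eta: \D \to \widehat{\mathcal{O}}_{h,x}$. It preserves the $\m$-adic filtrations; mod $h$ it becomes the classical Darboux isomorphism; and both sides are $h$-adically complete and flat over $\Ch$, so $\eta$ is a topological isomorphism.

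For $\mu$, let $K \subset \widehat{\mathcal{O}}_{h,x}$ denote the left ideal generated by $Y_1, \ldots, Y_n$ and $h$, which corresponds to $\J$ under $\eta$. The quotient $\widehat{\mathcal{E}}_{h,x}/K \widehat{\mathcal{E}}_{h,x}$ is canonically the fiber of $E$ at $x$, an $r$-dimensional $\C$-vector space. Choose a basis and lift to $e_1^{(0)}, \ldots, e_r^{(0)} \in \widehat{\mathcal{E}}_{h,x}$. Since the $Y_i$ reduce modulo $h$ to generators of $I_x$ and $E$ is supported on $Y$, we have $Y_i \cdot e_j^{(0)} \in h \widehat{\mathcal{E}}_{h,x}$. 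I would then adjust the $e_j$ order by order in $h$ so that $Y_i e_j = 0$ for all $i, j$ simultaneously; at each step the obstruction is a Koszul $1$-cocycle for the regular sequence $Y_1, \ldots, Y_n$ on a free module (the cocycle condition is guaranteed by $[Y_i, Y_j] = 0$), hence a coboundary on the formal polynomial ring, producing the correction. Finally, define $\mu: \M_r \to \widehat{\mathcal{E}}_{h,x}$ by sending the standard $\D$-module generators of $\M_r$ (those annihilated by $y_1, \ldots, y_n$) to $e_1, \ldots, e_r$. By construction $\mu$ is $\eta$-equivariant and respects filtrations; modulo $h$ it is an identification of free rank-$r$ modules over $\widehat{\mathcal{O}}_{Y,x}$, so by $h$-adic completeness it is an isomorphism of topological $\Ch$-modules.
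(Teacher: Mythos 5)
Your argument is correct, and it reaches the conclusion by a genuinely different route at the key step. Both you and the paper begin with the same formal Weinstein/Darboux normalization of $\widehat{\mathcal{O}}_{X,x}$ with $y_1,\ldots,y_n$ coming from generators of $I_x$. For the quantized algebra, however, the paper argues abstractly: $\D$ and $\widehat{\mathcal{O}}_{h,x}$ are two deformation quantizations of the formal symplectic disk, the controlling DG Lie algebra of polyvector fields with differential $[\pi,-]$ is identified with the formal de Rham complex, which is acyclic, so the Maurer--Cartan groupoid is trivial and the two quantizations are gauge equivalent. You instead run a Fedosov-style iteration, producing elements satisfying the exact Weyl relations by killing the defect order by order in $h$. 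The two arguments rest on the same formal Poincar\'e lemma; yours is more hands-on and has the advantage of exhibiting the quantized equations $Y_i$ of the Lagrangian explicitly, which you then reuse in the module step, while the paper's is shorter and delegates uniqueness to general deformation theory. For the module part the paper simply identifies $(\Eh/h\Eh)_x$ with a free module and cites the $r=1$ argument of Lemma 2.3.5 of \cite{BGKP}; your order-by-order construction of generators $e_j$ with $Y_i e_j = 0$ is essentially that argument written out, so here the approaches coincide. Two quibbles of phrasing rather than substance: the Hochschild cohomology $HH^2$ of the formal polydisk does not vanish (it is $\Lambda^2 T$) --- the complex whose acyclicity you actually need is the one twisted by the symplectic bivector, i.e.\ the formal de Rham complex, as your parenthetical hedge indicates; and in the module step the $Y_i$ do not act on $\widehat{\mathcal{E}}_{h,x}/h\widehat{\mathcal{E}}_{h,x}$ as a regular sequence of multiplications (they act as $h$ times first-order operators with symbol $\partial/\partial x_i$), so the ``Koszul complex'' you invoke is really the de Rham-type complex of commuting first-order operators with matrix coefficients, whose $H^1$ on the formal disk vanishes --- the conclusion stands.
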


\begin{proof}
We sketch a proof here briefly. First, we can find an isomorphism 
$$
\widehat{\mathcal{O}}_{X, x} = \C[[x_1, \ldots, x_n, y_1, \ldots, y_n]]
$$
such that $y_1, \ldots, y_n$ are the images of a regular sequence
defining $Y$ in the neighborhood of $x$. This is due to the 
formal Weinstein Lagrangian Neighborhood Theorem, the proof
of which, cf. Sections 7,8 
on \cite{CdS}, may be used without changes. The key point here is that 
the Moser Trick in Section 7.2 of \textit{loc. cit.} works in the 
completion of the local ring, although not the local ring itself.

Next, both $\D$ and $\widehat{\mathcal{O}}_{h,x}$ can be viewed
as deformation quantizations of the algebra $\C[[x_1, \ldots, x_n, 
y_1, \ldots, y_n]]$ with the Poisson bivector equal to 
$h (\sum_i \partial/\partial x_i \wedge \partial/\partial y_i) + h^2 \alpha(h)$.
By general deformation theory, the bivector $\alpha(h)$ is a
Maurer-Cartan solution for the DG Lie algebra of polyvector fields with 
the non-trivial differential given by the bracket with the bivector
$\sum_i \partial/\partial x_i \wedge \partial/\partial y_i$. If we identify
polyvector fields with differential forms, this bracket will become the
standard de Rham differential. Since for the algebra of formal power series the 
the Rham complex is exact and the Maurer-Cartan groupoid is invariant 
under quasi-isomophisms, all deformation quantizations are equivalent and 
$\D$ is isomorphic to the completion of $\mathcal{O}_{h, x}$. This
settles the assertions about the algebras. 

As for the module part, since $(\Eh/h \Eh)_x$ is free over $\mathcal{O}_{Y, x}$ we can find an isomorphism of this module with $M_r = \M_r/h \M_r \simeq \C[[x_1, \ldots, x_n]]^{\oplus r}$. Now we repeat the argument of Lemma 2.3.5 
(given there for $r=1$) to produce an isomorphism of $\widehat{\mathcal{E}}_{h, x}$ and $\M_r$.
\end{proof}

\bigskip
\noindent
Given the data $(X, Y, \mathcal{O}_h)$ we can consider the proalgebraic scheme $P_\J$ 
parameterizing the pairs $(x, \eta)$, where $x \in Y$ and $\eta$ is an isomorphism as above, cf. Section 3.1 of \cite{BK} for a similar case. 

\begin{lemma}
$\mathcal{P}_{\J}$ has a canonical structure of a transitive 
$\langle \Aut(\D)_{\J},\Der(\D)_{\J}\rangle$-torsor.
\end{lemma}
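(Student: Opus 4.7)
The proof has two components: establishing that $\mathcal{P}_\J$ is an $\Aut(\D)_\J$-torsor, and producing the Lie algebra morphism $\Der(\D)_\J \to \Gamma(\mathcal{P}_\J, T_{\mathcal{P}_\J})$ that promotes it to a transitive Harish-Chandra torsor.

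I would begin by letting $\Aut(\D)_\J$ act on the right by $(x, \eta) \cdot \Phi = (x, \Phi^{-1} \circ \eta)$. The preceding lemma produces such an $\eta$ at each $x \in Y$, so every fiber is nonempty; given two choices $\eta_1, \eta_2$ at the same $x$, the composition $\Phi = \eta_1 \circ \eta_2^{-1}$ is an automorphism of $\D$, and since both $\eta_i$ send the completion of the preimage of $I_x$ in $\widehat{\Oh}_{x}$ onto $\J$, the composition preserves $\J$ and lies in $\Aut(\D)_\J$. This gives a simply transitive action on fibers. Local triviality over $Y$ follows from the formal Darboux-type argument of the previous lemma, where local isomorphisms can be built inductively over powers of $\m$ and depend functorially on parameters in the base, yielding a section of $\mathcal{P}_\J$ over a Zariski neighborhood of any $x$.

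For the Lie algebra morphism, the pro-scheme $\mathcal{P}_\J$ carries a tautological isomorphism $\eta^{\mathrm{univ}}: \pi^{-1} \widehat{\Oh} \simeq \D \widehat{\otimes}_\C \OO_{\mathcal{P}_\J}$, where $\pi: \mathcal{P}_\J \to Y$. Given $\phi \in \Der(\D)_\J$, I would define the vector field $\xi_\phi$ via its action on functions: locally a function on $\mathcal{P}_\J$ is obtained by pulling back elements of $\D$ through $\eta^{\mathrm{univ}}$, and $\xi_\phi$ is read off from the infinitesimal flow $\eta^{\mathrm{univ}} \mapsto (1 + \epsilon \phi) \circ \eta^{\mathrm{univ}}$. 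Since $\phi$ preserves $\J$, this flow stays inside $\mathcal{P}_\J$, and $\xi_\phi$ is a well-defined vector field. The assignment $\phi \mapsto \xi_\phi$ is a Lie algebra homomorphism because conjugation of flows corresponds to the commutator of derivations, and on $\mathrm{Lie}(\Aut(\D)_\J) \subset \Der(\D)_\J$ it recovers the infinitesimal action of the structure group.

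The main work is to verify that $\phi \mapsto \xi_\phi$ induces an $\OO_{\mathcal{P}_\J}$-linear isomorphism $\Der(\D)_\J \otimes_\C \OO_{\mathcal{P}_\J} \simeq T_{\mathcal{P}_\J}$. Injectivity is clear: if $\phi \circ \eta = 0$ at a geometric point, then $\phi = 0$. For the isomorphism I would compare two short exact sequences:
$$
0 \to \mathrm{Lie}(\Aut(\D)_\J) \to \Der(\D)_\J \to \Der(\D)_\J / \mathrm{Lie}(\Aut(\D)_\J) \to 0
$$
on the Lie algebra side, and the Atiyah-type sequence $0 \to \pi^* \mathrm{ad}(\mathcal{P}_\J) \to T_{\mathcal{P}_\J} \to \pi^* T_Y \to 0$ on the geometric side. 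The first arrow is an isomorphism by the torsor structure. The real obstacle is the cokernel identification: one must check that the "non-integrable" derivations $[\tfrac{1}{\h} y_j, \cdot] \in \Der(\D)_\J$ project under any $\eta$ to a basis of $T_{x} Y$. Here the Lagrangian hypothesis is essential, through the previous lemma: the residues $\eta^{-1}(y_j) \bmod h$ form a regular sequence generating $I_x$, so the derivations $[\tfrac{1}{\h} y_j, \cdot]$ reduce modulo $h$ to a $\widehat{\OO}_{Y, x}$-basis of $\mathrm{Hom}(I_x/I_x^2, \OO_{Y, x}) \simeq T_x Y$. With this identification the two outer terms agree, hence so does the middle, completing the proof.
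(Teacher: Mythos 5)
Your proof is correct and follows essentially the same route as the paper's, which is considerably terser: it simply writes down the first-order deformations $x_\epsilon(a) = x(a) + \epsilon z(a)$, $\eta_\epsilon(b) = \eta(b) + \epsilon\,\partial(\eta(b))$ over the dual numbers and declares the remaining tHC axioms immediate, so your extra care with the anchor map --- in particular that the non-$\mathfrak{m}$-preserving derivations $[\tfrac{1}{h}y_j,\cdot]$ also displace the base point $x$ along $Y$, which is where preservation of $\J$ and the Lagrangian condition enter --- is exactly the content the paper leaves implicit. One small correction: $\mathrm{Hom}(I_x/I_x^2,\OO_{Y,x})$ is the normal bundle, not $T_xY$; the identification you want is $I/I^2 \simeq T_Y$ itself via $b \mapsto P(db,\cdot)$, under which the classes of $\eta^{-1}(y_j)$ (a basis of $I_x/I_x^2$ by the regular-sequence property of the previous lemma) map to a basis of $T_xY$, so the conclusion is unaffected.
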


\begin{proof}
The group $\Aut(\D)_\J$ acts on $P_\J$ 
by changing the isomorphism $\eta$.  Locally in 
the Zariski topology on $X$ we can denote by $A$, $B$ the rings of sections of 
$\mathcal{O}_X$, $\Oh$ over an affine subset, and then the pair $(x, \eta)$
can be described by two ring homomorphisms $x:  A \to \C$, $\eta: 
\lim_k (B/\mathfrak{n}^k)  \simeq \D$ where $\mathfrak{n} \subset B$ 
is the preimage in $B$
of the ideal $Ker(x) \subset A = B/hB$.  Given a derivation $\partial: \D \to \D$, 
we can consider its composition with $\D \to \C = \D/\mathfrak{m}$ which 
descends to a linear function on $\mathfrak{m}/\mathfrak{m}^2$, and using
the isomorphism $\eta$ and $h$-linearity of $\partial$, to a linear function 
$z: Ker (x)/(Ker (x))^2 \to \C$. Thus we can write extensions of $x$ and $\eta$
over the ring $\C[\epsilon]/(\epsilon^2)$ of dual numbers 
$$
x_\epsilon (a) :=  x(a) + \epsilon z(a); \qquad \eta_\epsilon(b) 
= \eta(b) + \epsilon \partial(\eta(b)). 
$$
This means that $\Der(D)_\J$ maps to vector fields on the torsor $P_\J$. The 
defining properties of tHC torsors (agreement with the group action and the 
bracket on vector fields) are immediate from the definitions.
\end{proof}

\begin{prop}
A choice of a vector bundle $E$ of rank $r$ on $Y$ and its deformation quantization 
$\mathcal{E}_{\h}$ is equivalent to a choice of a lift of the torsor $\mathcal{P}_{\J}$ 
to a transitive Harish-Chandra $\langle \Aut(\D,\M_r, \Der(\D,\M_r))\rangle$-torsor 
$\mathcal{P}_{\M_r}$ along the extension of pairs
$$1\rightarrow\langle GL_{\h}(r),\mathfrak{gl}_{\h}(r)\rangle\rightarrow\langle \Aut(\D,\M_r),\Der(\D,\M_r)\rangle\rightarrow\langle\Aut(\D)_{\J}, 
\Der(\D)_{\J}\rangle\rightarrow 1.$$
\end{prop}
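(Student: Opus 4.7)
The plan is to produce, in one direction, a tHC lift from a deformation quantization, and in the other, a deformation quantization from a tHC lift, and then to verify the two constructions are mutually inverse. For the forward direction, given $(E, \Eh)$, define $\mathcal{P}_{\M_r}$ as the proalgebraic scheme over $Y$ whose fiber over $x \in Y$ parameterizes compatible pairs $(\eta, \mu)$ with $\eta: \widehat{\mathcal{O}}_{h,x} \simeq \D$ an algebra isomorphism as in the preceding lemma on completions and $\mu: \widehat{\mathcal{E}}_{h,x} \simeq \M_r$ a $\Ch$-linear topological isomorphism satisfying $\mu(a\cdot e) = \eta(a)\cdot \mu(e)$. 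The group $\Aut(\D, \M_r)$ acts by postcomposition, and the forgetful map $\mathcal{P}_{\M_r} \to \mathcal{P}_{\J}$ is equivariant along the extension in the statement. Local non-emptiness and the fact that the fibers over $\mathcal{P}_{\J}$ are $GL_h(r)$-torsors follow from the module part of the lemma on completions, combined with the observation that the kernel of $\Aut(\D, \M_r) \to \Aut(\D)_{\J}$ is $\Aut_{\D}(\M_r) \simeq GL_h(r)$.

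To promote this to a tHC torsor, extend the $\Der(\D)_{\J}$-action on $T_{\mathcal{P}_{\J}}$ from the preceding lemma: given $(\phi, \psi) \in \Der(\D, \M_r)$, extend a local section $(\eta, \mu)$ over the ring $\Ch[\epsilon]/(\epsilon^2)$ of dual numbers by $\eta_\epsilon(b) = \eta(b) + \epsilon \phi(\eta(b))$ and $\mu_\epsilon(e) = \mu(e) + \epsilon \psi(\mu(e))$. The Leibniz identity $\psi(am) = \phi(a)m + a\psi(m)$ guarantees $(\eta_\epsilon, \mu_\epsilon)$ is again a compatible pair, producing the desired Lie algebra map $\Der(\D, \M_r) \to \Gamma(\mathcal{P}_{\M_r}, T_{\mathcal{P}_{\M_r}})$. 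Transitivity follows since $\mathcal{P}_{\J}$ is transitive for $\Der(\D)_{\J}$ and the image of $\mathfrak{gl}_h(r)$ surjects onto the vertical tangent bundle of $\mathcal{P}_{\M_r} \to \mathcal{P}_{\J}$.

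Conversely, given a tHC lift $\mathcal{P}_{\M_r}$, recover $\Eh$ as the associated sheaf $\mathcal{P}_{\M_r} \times^{\Aut(\D, \M_r)} \M_r$ with its natural $\Oh$-module structure coming from the $\Aut(\D, \M_r)$-equivariant pairing $\D \otimes \M_r \to \M_r$, after identifying $\Oh \simeq \mathcal{P}_{\J} \times^{\Aut(\D)_{\J}} \D$ (which holds for the given quantization, essentially by \cite{BK}). Reduction modulo $h$ produces the rank $r$ vector bundle $E = \mathcal{P}_{\M_r} \times^{\Aut(\D, \M_r)} M_r$, since $M_r$ is free of rank $r$ over $\C$ with the reductive subgroup $GL(r) \subset \Aut(\D, \M_r)$ acting in the standard way. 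The two constructions are mutually inverse by design: a choice of local section $(\eta, \mu)$ identifies $\widehat{\mathcal{E}}_{h,x}$ with $\M_r$ equivariantly, so the round trips are each canonically isomorphic to the identity.

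The main obstacle I anticipate is not conceptual but bookkeeping. Concretely, one must check that the sheaf-theoretic data on $Y$ (the Zariski sheaf $\Eh$ and its $\Oh$-action) match the formal-local data (isomorphisms $\mu$ on completed stalks) under the associated-bundle construction: that flatness over $\Ch$ and $h$-adic completeness of the reconstructed $\Eh$ follow from the corresponding properties of $\M_r$, that the ring $\Oh$ itself is canonically reconstructed from $\mathcal{P}_{\J}$ in the same framework, and that the associated action map matches the original multiplication on completions. These are standard verifications in the Harish Chandra formalism of \cite{BK}, and once they are in hand the proposition follows.
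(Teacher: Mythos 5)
Your forward direction is essentially the paper's: the paper also defines $\mathcal{P}_{\M_r}$ fiberwise as the space of compatible pairs $(\eta,\mu)$ and obtains the transitive Harish Chandra structure ``similarly to $P_\J$,'' i.e.\ by exactly the dual-numbers extension you write down, with the Leibniz rule for $(\phi,\psi)$ guaranteeing compatibility. That half is fine.

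The converse direction as you state it has a genuine error. The associated sheaf $\mathcal{P}_{\M_r}\times^{\Aut(\D,\M_r)}\M_r$ is \emph{not} $\Eh$: its fiber at $x$ is a copy of $\M_r\simeq\C[[x_1,\dots,x_n,h]]^{\oplus r}$, i.e.\ the completed stalk $\widehat{\mathcal{E}}_{h,x}$, so this associated bundle is the pro-vector bundle of formal jets of sections of $\Eh$, of infinite rank. Your supporting claim that ``$M_r$ is free of rank $r$ over $\C$'' is false --- $M_r=\M_r/h\M_r\simeq\C[[x_1,\dots,x_n]]^{\oplus r}$ is free of rank $r$ over the power series ring, not over $\C$ --- and this is exactly the point where the identification breaks. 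The correct statement, and the one the paper uses (citing Lemma 3.4 of [BK]), is that the associated bundle $P_{\M_r}$ carries a flat algebraic connection coming from the tHC structure (a Grothendieck-type connection on jets), and $\Eh$ is recovered as the sheaf of \emph{flat sections} of $P_{\M_r}$; the $\Oh$-module structure then comes from the analogous identification of $\Oh$ with flat sections of $\mathcal{P}_\J\times^{\Aut(\D)_\J}\D$. Your round-trip verification inherits the same gap, since it compares $\widehat{\mathcal{E}}_{h,x}$ with the fiber of the associated bundle rather than with the stalk of its flat sections. The fix is standard and short, but as written the reconstruction step would fail.
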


\begin{proof}
Given $\Eh$, we can construct the tHC torsor $P$ for which the 
fiber at $x \in Y$ represents all pairs 
$(\eta, \mu)$ which identify completions of $\Oh$ and $\Eh$ at $x$ with the
``formal local models'' $\D$ and $\M_r$. It has the transitive Harish Chandra
structure similarly to $P_\J$.
By consturction, the torsor $P_\J$ is 
induced from $P$ by forgetting the isomorphism $\mu$. Conversely, given a lift
$P$ of the tHC torsor $P_\J$, we have a vector bundle $P_{\M_r}$ associated 
via the action of $\langle \Aut(\D, \M_r), \Der (\D, \M_r) \rangle$ on 
$\M_r$. It carries a flat algebraic connection as any vector bundle associated to 
a tHC torsor. As in Lemma 3.4 of \cite{BK}, 
the Zariski sheaf $\Eh$ is can be recovered
as the sheaf of flat sections of $P_{\M_r}$. 
\end{proof}

\section{Lifting the torsor by steps.}

In this section we study the problem of tHC orsor lifting by considering a chain of surjections
$$
\langle G,  \mathfrak{h} \rangle
\to \langle G_2,  \mathfrak{h}_2 \rangle
\to \langle G_1,  \mathfrak{h}_1 \rangle
\to \langle G_0,  \mathfrak{h}_0 \rangle
$$
At the two ends we have pairs 
$$
\langle G,  \mathfrak{h} \rangle = \langle \Aut(\D,\M_r),\Der(\D,\M_r)\rangle, \qquad
\langle G_0,  \mathfrak{h}_0 \rangle = \langle \Aut(\D)_\J,\Der(\D)_\J\rangle
$$
and the intermediate pairs both have product decompositions
$$
\langle G_2, \mathfrak{h}_2 \rangle 
= \langle G, \mathfrak{h} \rangle/
\langle \C^*, \C \rangle \simeq
\langle \Aut(\D, \M_1)/\C^*, \frac{1}{h}\J/\C \rangle \times \langle PGL_h(r), \mathfrak{pgl}_h(r) \rangle. 
$$
$$
\langle G_1, \mathfrak{h}_1 \rangle =
\langle \Aut(\D)_\J, \Der(\D)_\J \rangle \times \langle PGL_h(r), \mathfrak{pgl}_h(r) \rangle. 
$$
The following is a more precise version of Theorem 1.
\begin{theorem}
Let $P_0= P_\J$ be the transitive Harish-Chandra torsor over the pair $\langle G_0, \mathfrak{h}_0\rangle$
on $Y$, induced by the choice of quantization $\Oh$ on $X$ and a closed embedding $j: Y \to X$ of a 
smooth Lagrangian subvariety $Y$. Assume that $\Oh$ has the non-commutative period
$$
[\omega] + h \omega_1 + h^2 \omega_2 + \ldots \in H^2_{DR}(X)[[h]]
$$
Then $j^* \omega_1$ admits a canonical lift to $H^2_F(Y)$ represented by the class
of the Picard algebroid $\LL(\Oh)$. 
\begin{enumerate}
\item The groupoid category of lifts of $P_0$ to a tHC torsor $P_1$ over 
$\langle G_1,  \mathfrak{h}_1 \rangle$ is equivalent to the category of 
$PGL_h(r)$-bundles on $Y$ with a flat algebraic connection.
\item Given a choice of $P_1$, the category of its lifts to a tHC torsor $P_2$ over
$\langle G_2,  \mathfrak{h}_2 \rangle$ is equivalent to the category of lifts of 
the original torsor $P_0$ to a tHC torsor over  
 $\langle \Aut(\D, \M_1)/\C^*, \frac{1}{h}\J/\C \rangle$. The latter is non-empty if and only if 
 in $H^2_{DR}(Y)[[h]]$ one has
$$
j^*(h^2 \omega_2 +  h^3 \omega_3 + \ldots) = 0;
$$ 
\item Given a choice of $P_2$, the groupoid category of its lifts to a 
tHC
torsor $P$ over $\langle G,  \mathfrak{h} \rangle$ is equivalent to the category of 
rank $r$ modules $E$ over the Lie algebroid $\mathcal{L}^+(\Oh)$, equipped with an isomorphism 
of $\mathbb{P}(E)$ and the flat $PGL(r)$-bundle 
induced from $P_1$ via the homomorphism $PGL_h(r) \to PGL(r)$. Such lifts exist
if and only if the following equality holds in $H^2_F(Y)$:
$$
\frac{1}{r} c_1(E) = \frac{1}{2} c_1(K_Y) + c(\LL(\Oh)) = c(\LL^+(\Oh)).
$$

\item For a given choice of $P_1$, the groupoid category of its lifts to a tHC torsor
$P$ over $\langle G_1,  \mathfrak{h}_1 \rangle$ - if non-empty - is 
equivalent to the groupoid category $\mathcal{O}_Y[[h]]^*$-torsors with a flat 
algebraic connection. More precisely, for any fixed choice of $P$ and a flat 
$\mathcal{O}_Y[[h]]^*$-torsor $L$ there is a well-defined tHC torsor $L \star P$, 
and the functor $L \mapsto L \star P$ gives an equivalence of categories. 
\end{enumerate}
\end{theorem}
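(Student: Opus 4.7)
The plan is to process the tower $P_0 \to P_1 \to P_2 \to P$ one extension at a time, converting each extension of tHC pairs into a classification of flat bundles or an obstruction class in truncated de Rham cohomology. \emph{Part (1):} Since $\langle G_1, \mathfrak{h}_1 \rangle$ is literally a direct product, a lift of $P_0$ decomposes as $P_1 = P_0 \times_Y Q$ for a principal $PGL_h(r)$-bundle $Q$, and the Lie algebra map $\mathfrak{h}_1 \to \Gamma(P_1, T_{P_1})$ splits into the tHC structure on $P_0$ plus a $\mathfrak{pgl}_h(r)$-splitting of the Atiyah sequence of $Q$; the latter is exactly a flat algebraic connection on $Q$. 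Functoriality in both directions identifies the two groupoids.

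\emph{Part (2):} Because the second factor is untouched, lifting $P_1$ to $P_2$ reduces to lifting $P_0$ along $\langle \Aut(\D, \M_1)/\C^*, \frac{1}{h}\J/\C\rangle \to \langle \Aut(\D)_\J, \Der(\D)_\J\rangle$. By the rank-one results of \cite{BGKP}, lifts of $P_0$ to $\Aut(\D, \M_1)$-torsors correspond to rank-one deformation quantizations of a square root of $K_Y$, with obstruction the full non-commutative period $[\omega] + h \omega_1 + h^2 \omega_2 + \cdots$ restricted to $Y$. Quotienting by $\langle \C^*, \C \rangle$ absorbs the $h^0$ ambiguity (which vanishes already by the Lagrangian hypothesis) and the $h^1$ piece, whose $H^2_F$ refinement is exactly the canonical class $c(\LL(\Oh))$ representing $j^*\omega_1$. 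What remains as obstruction is $j^*(h^2 \omega_2 + h^3 \omega_3 + \cdots)$ in $H^2_{DR}(Y)[[h]]$.

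\emph{Part (3):} The kernel of $\langle G, \mathfrak{h}\rangle \to \langle G_2, \mathfrak{h}_2\rangle$ is the central pair $\langle \C^*, \C\rangle$ in $GL(r)$, so lifts of $P_2$ form a $\C^*$-gerbe with obstruction in $H^2_F(Y)$. I identify a lift $P$ directly with the data appearing in Proposition 2: the associated sheaf $P \times_G (\M_r/h \M_r)$ recovers a rank $r$ bundle $E$, the derivation-level Harish Chandra structure gives the $\LL^+(\Oh)$-module structure on $E$, and projectivization of the $\M_r$-factor recovers the $PGL_h(r)$-bundle fixed in $P_1$. Applying the trace morphism $\LL(E) \to \LL(\det E)$ from Section 1, the obstruction class becomes $c(\LL^+(\Oh)) - \frac{1}{r} c_1(E)$, so vanishing is equivalent to the Chern-class identity $\frac{1}{r} c_1(E) = \frac{1}{2} c_1(K_Y) + c(\LL(\Oh))$.

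\emph{Part (4):} For a chosen $P$, any other lift of the same $P_1$ differs by twisting along the central scalar embedding. The $(1 + h\Ch)$-kernel from the $P_1 \to P_2$ step integrates, via the flat connection inherent in the tHC structure, to flat $(1 + h \OO_Y[[h]])^*$-torsors; the $\C^*$-kernel from the $P_2 \to P$ step integrates to flat $\OO_Y^*$-torsors. The split extension $1 \to (1 + h\OO_Y[[h]])^* \to \OO_Y[[h]]^* \to \OO_Y^* \to 1$ glues these into a single free, transitive action of flat $\OO_Y[[h]]^*$-torsors via $L \star P := P \times_{\OO_Y[[h]]^*} L$, where $\OO_Y[[h]]^*$ acts on $P$ by central scalars along its embedding into $G$, using flat sections to globalize. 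I expect the main obstacle to be Part (3): identifying the $\langle \C^*, \C\rangle$-gerbe class with $c(\LL^+(\Oh)) - \frac{1}{r} c_1(E)$ requires careful tracking of both the $H^2_F$-obstruction and the $PGL_h(r)$-matching condition, via the trace map and the rank-one Picard-algebroid calculation from Section 1. Parts (1) and (2) reduce essentially to flat-bundle theory and to the rank-one case of \cite{BGKP}, and Part (4) is then a torsor-theoretic combination of Parts (2) and (3).
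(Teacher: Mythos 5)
Your proposal follows essentially the same route as the paper: the same tower of extensions, the product splittings for parts (1) and (2) with the reduction to the rank-one results of \cite{BGKP}, the comparison of the two Picard algebroid classes in $H^2_F(Y)$ via the trace map for part (3), and the identification of the set of lifts with a torsor over flat $\mathcal{O}_Y[[h]]^*$-torsors for part (4). The only notable presentational difference is in part (4), where the paper packages the central $\langle \Ch^*, \Ch\rangle$-extension as a gerbe over the $\Ch$-version of the Deligne complex (following Brylinski) rather than splitting the kernel into $\C^*$ and $1+h\Ch$ pieces, but the resulting classification is the same.
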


\bigskip
\noindent
\textit{Proof.}
The first part is easy due to the splitting of the Harish Chandra pair 
$\langle G_1, \mathfrak{g}_1 \rangle$. We are looking for a torsor $P_1$ over $G_1$ 
and a $G_1$-equivariant form $\mathfrak{h}_1$-valued form on the total space of $P_1$, 
which lifts a similar form on the total space of $P_0$. Due to the splitting of $G_1$
such $P_1$ would be a fiber product over $Y$, of the original $P_0$ and a $PGL_h(r)$-torsor 
$Q$. The connection form on $P_1$, due to the $G_1$-equivariant splitting of $\mathfrak{h}_1$,
would also have to be a sum of the $\mathfrak{h}_0$-valued connection on $P_0$ and 
a $\mathfrak{pgl}_h(r)$-valued connection on $Q$. The zero curvature condition on $P_1$
(i.e. agreement with the Lie bracket) implies that the connection on $Q$ is flat.

\bigskip
\noindent
For the second part, we use the splitting of the Harish Chandra pair 
$\langle G_2, \mathfrak{h}_2 \rangle$ and the fact that the flat bundle $Q$ 
is already defined on the previous step. Therefore, $P_2$ would be a fiber product 
of $Q$ and a tHC torsor $R$ over $\langle \Aut(\D, \M_1)/\C^*, \frac{1}{h}\J/\C \rangle$
lifting $P_0$. Comparing the definitions we see that a tHC structure on $P_2$ of the 
required type is equivalent to a similar structure on $R$ lifting that of $P_0$. 
Existence  of $R$ follows by a combination of Proposition 2.7 in \cite{BK} and 
 Lemma 5.2.2 of \cite{BGKP}.

\bigskip
\noindent
For the third part we use the short exact sequence
$$
1 \to \langle \mathbb{C}^*, \mathbb{C} \rangle \to \langle G,  \mathfrak{h} \rangle \to
\langle G_2,  \mathfrak{h}_2 \rangle \to 1.
$$
First consider the group side. Due to the splitting of Lemma 6 on the level of 
usual torsors, we just need to lift a $PGL_h(r)$ torsor $Q$ to a torsor  $\widetilde{Q}$
over $GL(r) \ltimes exp(h \cdot \mathfrak{pgl}_h(r)$. The required Harish Chandra structure is
an isomorphism of bundles $T_{P} \simeq \mathfrak{h}\otimes_\C \OO_{P}$
which is $G$-equivariant and has the zero curvature condition.
Since the action of $\C^* \subset GL(r)$
on $h\cdot \mathfrak{pgl}_h(r)$ and $\mathfrak{h}$ is trivial, we can take $\C^*$ invariant direct 
image of both sheaves under $P \to P_2= P/\C^*$ where they have a $G_2$-equivariant structure, and 
look for a $G_2$-equivariant isomorphism on the total space of $P_2$ instead. 

The tangent bundle of $P$ turns into the Atiyah algebroid of the $\C^*$-torsor $P \to P_2$
and the trivial bundle with fiber $\mathfrak{h}$ also gets a Lie algebroid structure since
on $P_2$ we have identified $T_{P_2}$ with the trivial bundle with fiber $\mathfrak{h}_2
\simeq \mathfrak{h}/\C$. We need to construct a $G_2$-equivariant isomorphism of two bundles
with an additional property that corresponds to agreement of brackets (zero curvature) on $P$.
The $G_2$-equvariant stucture means that the projection to 
$T_{P_2} \simeq \mathfrak{h}_2 \otimes_\C \OO_{P_1}$
has a partial section over the smaller sub-bundle $\mathfrak{g}_2 \otimes_\C  \OO_{P_1}$.
Taking the quotient of both algebroids by the image of this sub-bundle, and then taking
$G_2$-equivariant direct image to $Y$, we reduce to the question of isomorphism of two 
Picard algebroids on $Y$. Both are classified by a cohomology class in $H^2_F(Y)$, hence 
the isomorphism exists precisely when the two classes are equal. 

To compute the class for the algebroid obtained from $\mathfrak{h} \otimes \OO_P$ 
observe that instead of taking the equivariant descent with respect to $G_2$, we can 
first take the descent by $PGL_h(r)$ and then by $\Aut(\D, \M_1)/\C^*$. 
The first step will lead to an equivariant Picard 
algebroid on $R$ with the fiber corrseponding to the middle term of the Lie algebra
extension
$$
0 \to \C \to \Der(\D, \M_1) \to \Der(\D, \M_1)/\C \to 0 
$$ 
In other words, we find ourselved in the rank 1 situation. 
By Proposition 4.3.5 in \cite{BGKP} its equivariant 
descent to $Y$ gives a Picard algebroid with the class 
$\frac{1}{2} c_1(K_Y) + j^* \omega_1$. 

For the Atiyah algebroid of $P \to P_2$ we will first take the descent with respect to 
$\Aut(\D, \M_1)/\C^*$, then with respect to $exp(h \cdot \mathfrak{pgl}_h(r)$.
 This leads to a $GL(r)$-torsor $E$ on $Y$ lifting 
a $PGL(r)$-torsor $\mathbb{P}(E)$. The equivariant descent of the Atiyah algebroid
of $P \to P_2$ to a Picard algebroid on $Y$ is just the quotient of the Atiyah algebroid 
of $E$ by the trace zero part in $End(E) \subset \LL(E)$. 
Therefore it has class $\frac{1}{r} c_1(E)$ and the desired equation in $H^2_F(Y)$ is 
$\frac{1}{r}c_1(E) = \frac{1}{2} c_1(K_Y) + j^* \omega_1$, concluding the proof of 
the third part.

\bigskip
\noindent
For the last part, we interpret the tHC torsor lifting in the language of gerbes, 
following Chapter 5 of \cite{B}. Consider the central extension 
$$
1 \to \langle \C_h^*, \C_h \rangle \to \langle G, \mathfrak{h} \rangle \to 
\langle G_1, \mathfrak{h}_1 \rangle \to 1
$$
The lifting of a tHC torsor $P_1 = Q \times_Y P_0$ to $P$, can be split as follows. First,
choose a lifting on the group level. As we have seen before, globally on $Y$ this may not  
be possible but local lifts form a gerbe over $\OO_Y[[h]]^*$, see 
Definition 5.2.4 of \cite{B}. If a (local) lift is chosen, we can look for an $\mathfrak{h}$-valued
connection on the lift, and all possible choices of such a connection 
form a connective structure on the gerbe in the sense of Definition 5.3.1
of \textit{loc. cit.}. Moreover, whenever we can choose 
a connection, this leads to the curvature $d \Omega + \frac{1}{2} \Omega \wedge \Omega$ where $\Omega$ is the $\mathfrak{h}$-valued form on the total space
of the torsor which described the connection.  This gives a curving of the
connective structure, see Definition 5.3.7 of 
\textit{loc. cit}. As in Theorem 5.3.17 of \textit{loc. cit.} this leads to a gerbe over the $\C_h$-version of Deligne complex
$$
\mathfrak{Del}_h(Y) : = \OO_Y[[h]]^* \to \Omega^1_Y[[h]] \to \Omega^2_Y[[h]] \to \ldots
$$
where the first arrow sends $f$ to $\frac{df}{f}$. 
Since we want to trivialize the gerbe over this complex (i.e. the lifting torsor must exist
\textit{and} it must admit a connection \textit{and} the connection must have zero 
curvature), the category of such trivializations - if nonempty - is equivalent to the category 
of torsors over the same complex. By a $\Ch$ version of Theorem 2.2.11
in \textit{loc. cit.} the latter is equivalent to the category of  $\OO_Y[[h]]^*$-torsors with a flat algebraic
connection. In particular the set of equivalence classes of
deformation quantizations with 
a fixed projectivization  $Q$ is in bijective correspondence with 
the set of isomorhism clases of $\OO_Y[[h]]^*$-torsors with a flat algebraic
connection.

A more explicit, if a bit less conceptual version of this step is as follows. We
look at the short exact sequence
$$
\Omega^{\geq 1}(Y)[[h]] \to \mathfrak{Del}_h(Y) \to \OO_Y[[h]]^*
$$
and piece of the associated long exact sequence in cohomology:
$$
\ldots \to H^1(Y, \OO_Y[[h]]^*) \to H^2_F(Y)[[h]] \to H^2(Y, \mathfrak{Del}_h(Y))
\to H^2(Y, \OO_Y[[h]]^*) \to \ldots...
$$
If a $\langle G_1, \mathfrak{h}_1 \rangle$ torsor and its tHC structure can be 
trivialized on an covering $\{U_i\}$ of $X$ then we can first look at
$G$-valued liftings $\psi_{ij}: U_i \cap U_j \to G$ of the cocycle defining $P_2$.
Then we can look at those $\mathfrak{h}$-valued connections $\nabla_i$
on the trivial  $G$-bundles on each $U_i$ which lift the given 
$\mathfrak{h}_1$-valued connection on $P_1$. 

The functions $\psi_{ij}$ lead to an expression $a_{ijk} = \psi_{ij} \psi_{jk} \psi_{ki}$
on $U_i \cap U_j \cap U_k$ which takes values in $\OO_Y[[h]]^*$ since
its projection to $G_2$ is trivial. Moreover, the difference of $\nabla_i - \nabla_j$
on $U_i \cap U_j$ is given by a $\Ch$-valued differential 1-form $b_{ij}$
and a quick comparison of definitions shows that on triple intersections 
$(b_{ij} + b_{jk} + b_{ki}) = d a_{ijk}/a_{ijk}$. On each $U_i$ the 
curvature of $\nabla_i$ is given by a 2-form $c_i$ which takes values in 
the Lie subalgebra $\Ch \subset \mathfrak{h}$ since the projection of the
connection to $\mathfrak{h}_2$ satisfies the zero curvature conditon. On 
the double intersections we have $(c_i - c_j) = d b_{ij}$, the usual comparison 
of curvatures for two different connections. 

The two identities relating $b_{ij}$ with $a_{ijk}$ and $c_i$ with $b_{ij}$ mean that
the three groups of sections define a single cohomology class in 
$H^2(Y, \mathfrak{Del}_h(Y))$. 

The projection of this class to $H^2(Y, \OO_Y[[h]]^*)$ is represented by 
the cocycle $a_{ijk}$. In the case when the class is trivial we can readjust 
the choice of $\psi_{ij}$  by the cochain resolving $a_{ijk}$
and ensure $a_{ijk} \equiv 1$. This means that
$\psi_{ij}$ do define a lift of $P_1$ to a $G$-torsor $P$. Now the obstruction too is 
lifted from $H^2(Y, \mathfrak{Del}_h(Y))$ to $H^2_F(Y)[[h]]$. The fact that
such lift is only well defined up to an element in the image of  
$H^1(Y, \OO_Y[[h]]^*)$ reflects the fact that $P$ is only well defined only 
up to a twist by a torsor over $\OO_Y[[h]]^*$.  Since $P \to P_1$ is 
a $G_1$-equivariant torsor over $\Ch^*$ on $P_1$ we can descend its Lie algebroid to $Y$
and obtain a class of this algebroid in $H^2_F(Y)[[h]]$ which must vanish 
if we want to lift the tHC structure to $P$.  This finishes the proof. $\square$

\section{Remarks and open questions.}

\begin{itemize}

\item When $Y$ is affine, a straightforward argument following Section 3
and Appendix A from \cite{Y} or, alternatively, an imitation of the 
arguments in Section 5 of \cite{CCT}, shows that we can assume that $\Eh$ 
is isomorphic to $E[[h]]$ as a Zariski sheaf of $\C_h$-modules. This 
is established inductively, considering $\Eh/h^k \Eh$ and observing 
that obstructions to trivializing $\Eh/h^{k+1} \Eh$, given a trivialization 
of $\Eh/h^k \Eh$, live in an $Ext$ group which vanishes since $Y$ is
affine and $E$ is locally free on $Y$. 

\item Our results make sense in the category of complex manifolds 
where de Rham cohomology groups correspond to the holomorphic de Rham 
complex. 
In fact, the arguments of the paper carry over to the case of etale 
topology, and should hold in the  case of smooth Deligne-Mumford stacks 
as well. 

For the case of real manifolds more work is needed to adapt our arguments
but one expects that the output will be similar in spirit 
 to the arguments in \cite{NT}.

\item Perhaps the most celebrated example of a vector bundle with a flat projective 
connection is  the bundle of conformal blocks on the moduli space $\mathfrak{M}$ of (marked) curves with the Hitchin connection.
It would be interesting to see whether $\mathfrak{M}$ may be realized as a Lagrangian
subvariety (actually, sub-orbifold) in a larger sympletic variety, so that the bundle
of conformal blocks admits a deformation quantization. Maybe the approach to 
$\mathfrak{M}$ via representations of $\pi_1(C)$ in $PSL(2, \mathbb{R})$
(which are embedded into $PSL(2, \mathbb{C})$ representations) could give
something here.

\item In theory, our methods should work for vector bundles on smooth coisotropic
subvarieties. In this case the conormal bundle $N^*$ embeds as a null-foliation
sub-bundle of the tangent bundle: $N^* \simeq T_F \hookrightarrow T_Y$. 
Consequently, the full de Rham complex will be replaced by the normal de Rham complex
built from exterior powers of $T^*_F \simeq N$. For second order deformations 
this
has been studied in \cite{P}.

\item It has been noted in \cite{P} that for a given bundle $E$ on $Y$ its
 deformation quantization problem is described by a sheaf of \textit{curved} 
 dg Lie algebras. If we assume that all $\omega_i$ restrict to zero on $Y$, 
 we have an interesting situation: 
at the first step of the deformation there is an obstruction (coming from the curvature
element) but if it is resolved, there are no further obstructions. One way to understand it
is to view $\Eh$ as a deformation of an $\Oh$ module $E$ over the power series
ring $\C[[h]]$. Not every deformation is a deformation quantization though, 
and one way to formulate the condition is to require that the first order
deformation is given by an element of $Ext^1_{\Oh}(E, E)$ which 
projects to identity under the map 
$$
Ext^1_{\Oh}(E, E) \to Hom_{\mathcal{O}_X}(Tor_1^{\Oh}(\mathcal{O}_X, E), E)
= Hom_{\mathcal{O}_X}(E, E)
$$
provided by the Change of Rings Spectral Sequence for the homomorphism
$\Oh \to \mathcal{O}_X$. This implies that the identity $Id_E$ must be closed 
with respect to this spectral sequence differential
$Hom_{\mathcal{O}_X}(E, E) \to Ext^2_{\mathcal{O}_X}(E, E)$. 
Once this obstruction  vanishes, any higher order extension of the first 
order deformation as an $\Oh$-module, is automatically a deformation 
quantization of $E$ and the first order adjustment to the differential of 
the deformation complex
in \cite{P} removes the curvature.

\item Perhaps the category of vector bundles with a structure of a $\LL^+(\Oh)$-modules
deserves a closer attention. The condition of having a flat structure on $\mathbb{P}(E)$ and
an equation in $H^2_F(Y)$
$$
\frac{1}{r} c_1(E) = \frac{1}{2} c_1(K_Y) + j^* \omega_1
$$
is stable under direct sums, and taking the tensor product of $E$ with a flat vector 
bundle $F$. For any pair of bundles $E_1, E_2$ with these conditions the bundle 
$Hom_{\mathcal{O}_Y}(E_1, E_2)$ is flat. Indeed, for a local section 
$\varphi$ of this bundles we can attempt to take its derivative along 
a vector field $\partial$ by lifting it to a section of 
$\LL^+(\Oh)$ and then using the action of $\LL^+(\Oh)$ of $E_1$ and
$E_2$. The lift of $\partial$ is only well-defined up to a section of 
$\mathcal{O}_Y \subset \LL^+(\Oh)$ but $\varphi$ is $\mathcal{O}$-linear
so its derivative will not depend on the choice of this lift.

In particular, if
we can find a line bundle $L$ which satisfies the above equation with $r=1$, then 
we can write any $\LL^+(\Oh)$-module in the form $E = F \otimes L$ where $F$ has flat 
algebraic connection. Of course, when $\frac{1}{2} c_1(K_Y) + j^* \omega_1 = 0$ we can 
take $L = \OO_Y$. 
Another instance is when $j^* \omega_1 = 0$, in which case the 
category admits an involution $E \mapsto E^* \otimes K_Y$. When, 
in addition, we can find $L$ such that $K_Y \simeq L^{\otimes 2}$
this involution corresponds to dualization of the local system $F$. 

In would be interesting to compare the de Rham cohomology of the 
local system $Hom_{\mathcal{O}_Y}(E_1, E_2)$ and 
the groups $Ext_{\Oh}(\mathcal{E}_{1, h}, \mathcal{E}_{2, h})$ for 
deformation quantizations of $E_1$, $E_2$, respectively.

\item The original motivation of \cite{BG} was to relate 
deformation quantization of the Kapustin-Rozansky 2-category
of the original algebraic symplectic variety $(X, \omega)$, cf. \cite{KR}. 
At least when the class $\omega_1 \in H^2_{DR}(X)$ vanishes, it is 
natural to expect that an $\LL^+(\Oh)$-module $E$ on a Lagrangian 
$Y$ should define an object $(Y, E)$ in this category. We further expect
that for the same $Y$ and different $E_1$, $E_2$ the cyclic homology 
of the 1-category $Hom((Y, E_1), (Y, E_2))$ has something to do 
with the de Rham cohomology of the local system $Hom_{\mathcal{O}_Y}
(E_1, E_2)$. 

This also suggests a connection between $\mathcal{L}^+(\Oh)$-modules
and generalized complex branes of Gualtieri, \cite{G}. Indeed, those are defined 
as modules over a Lie algebroid which appears after restriction to a
subvariety. On the other hand, counting  dimensions we see that
$\mathcal{L}(\Oh)$ cannot come from an exact Courant algebroid on $X$, 
but perhaps one should work with some algebroid on $X \times Spec(\C[[h]])$. 
By an earlier remark in this section one expects a more general construction
for vector bundles on smooth coisotropic subvarieties when connections
and similar structures are only defined along the null-folitation. 
\end{itemize}

\noindent
\textit{Address: Department of Mathematics, UC Irvine, 340  Rowland Hall, Irvine CA 92617, USA}

\end{document}